\newtheorem{theorem}{Theorem}[section]
\newtheorem{corollary}[theorem]{Corollary}
\newtheorem{lemma}[theorem]{Lemma}
\theoremstyle{remark}
\numberwithin{equation}{section}
\newcommand{\Z}{\mathbb Z}
\author{Xiaoxia Wang }
\address{DEPARTMENT OF MATHEMATICS, SHANGHAI UNIVERSITY, SHANGHAI 200444, P. R. CHINA}
\email{xiaoxiawang@shu.edu.cn (X. Wang), xchangi@shu.edu.cn (C. Xu).}
\author{Chang Xu$^*$}
\title{Some $q$-supercongruences on double and triple sums}
\subjclass[2010]{Primary 33D15; Secondary 11A07, 11B65}
\keywords{basic hypergeometric series; $q$-supercongruences; creative microscoping; cyclotomic polynomial.}
\begin{document}
\begin{abstract}
In this paper, we investigate a number of $q$-supercongruences on double and triple sums. 
By means of a lemma devised by  El Bachraoui and its generalization, we transform some
$q$-supercongruences on double and triple sums into 
the $q$-supercongruences of the square and cube of truncated basic hypergeometric series.
Our main tools still involve
 the `creative microscoping' method introduced by Guo and Zudilin,
a lemma designed by Guo and Li and the Chinese remainder theorem.
\end{abstract}
\maketitle
\section{Introduction}
It was a family of formulas for $1/\pi$ that were discovered  by Ramanujan \cite{Ramanujan}.
Based on these formulas, several Ramanujan-type supercongruences were presented in \cite{Van},
such as,
\begin{align*}\label{EE5}
&\mathrm{(B.2)}\quad\sum_{k=0}^{\frac{p-1}{2}}(-1)^k(4k+1)\frac{(\frac{1}{2})^3_k}{k!^3}
\equiv\frac{-p}{\Gamma_p(\frac{1}{2})^2}\pmod{p^3},\\
&\mathrm{(F.2)}\quad\sum_{k=0}^{\frac{p-1}{4}}(-1)^k(8k+1)\frac{(\frac{1}{4})^3_k}{k!^3}
\equiv\frac{-p}{\Gamma_p(\frac{1}{4})\Gamma_p(\frac{3}{4})}\pmod{p^3},
\quad\mathrm{if}\quad p\equiv 1\pmod4,\\
&\mathrm{(G.2)}\quad\sum_{k=0}^{\frac{p-1}{4}}(8k+1)\frac{(\frac{1}{4})^4_k}{k!^4}
\equiv\frac{p\Gamma_p(\frac{1}{4})\Gamma_p(\frac{1}{2})}{\Gamma_p(\frac{3}{4})}\pmod{p^3},
\quad\mathrm{if}\quad p\equiv 1\pmod4.
\end{align*}
Here and in what follows, $p$ is an odd prime, $(a)_n=a(a+1)\cdots(a+n-1)$ is the Pochhammer symbol, and $\Gamma_p(\cdot)$ is
 the $p$-adic gamma function.

In the past decades,  some progress on supercongruences and their $q$-analogues has been made, the reader may find some of the results in \cite{Guo2,Wang_Yue,Wang_Yu,Xu_Wang}.
Particularly, Guo \cite[Theorem 1.2]{Guo5} gave a $q$-analogue of (B.2) as follows: for any odd integer  $n>1$,
\begin{equation}\label{EEE5}
\sum_{k=0}^{\frac{n-1}{2}}(-1)^kq^{k^2}[4k+1]\frac{(q;q^2)^3_k}{(q^2;q^2)^3_k}
\equiv
(-1)^{\frac{n-1}{2}}q^{\frac{(n-1)^2}{4}}[n]
\pmod{[n]\Phi_n(q)^2}.
\end{equation}
Here and throughout the paper, $[n]=[n]_q=1+q+\cdots+q^{n-1}$ denotes the {\em $q$-integer},
and the {\em $q$-shifted factorial} is defined as
$(a;q)_0=1$ and $(a;q)_n=(1-a)(1-aq)\cdots(1-aq^{n-1})$ when  $n\in \mathbb{N}$.
Briefly, we write $(a_1,a_2,\ldots,a_m;q)_n=(a_1;q)_n (a_2;q)_n\cdots (a_m;q)_n$.
Moreover,  $\Phi_n(q)$ denotes the $n$-th {\em cyclotomic polynomial} in $q$,
which satisfies
$\Phi_n(q^2)=\Phi_n(q)\Phi_n(-q)$ for any odd integer $n$.

Applying the `creative microscoping' method introduced in \cite{Guo_Zudilin},
El Bachraoui \cite{Bachraoui} observed several supercongruences from the square of truncated
basic hypergeometric series.
Later,  Li \cite{LiLong} gave some new $q$-supercongruences. For example,
for any positive odd integer $n$,
\begin{equation} \label{EEE4}
\sum_{k=0}^{n-1}\sum_{j=0}^{k}
c_q(j)c_q(k-j)
\equiv
q^{\frac{n+1}{2}}[n]^2\pmod{[n]\Phi_n(q)^2},
\end{equation}
where $c_q(k)=(-1)^kq^{k^2}[4k+1](q;q^2)^3_k/(q^2;q^2)^3_k$.
Letting $n=p$, $q\rightarrow1$ in \eqref{EEE4}, Li \cite{LiLong} also obtained the following supercongruence:
\begin{flalign}
\sum_{k=0}^{p-1}\frac{(-1)^k}{64^k}
\sum_{j=0}^{k}
{\dbinom{2j}{j}}^3
{\dbinom{2k-2j}{k-j}}^3(4j+1)(4k-4j+1)\equiv
p^2\pmod{p^3}.\label{eq2}
\end{flalign}

In this paper,
by some essential tools including
the `creative microscoping' method, we shall
investigate several $q$-supercongruences on double sums similar to \eqref{EEE4}.

Our first two results can be stated as follows.
\begin{theorem}\label{Thm3}
For odd integers $n>1$ and integers $k\geq0$,
\begin{equation*}
c_q(k)=(-1)^k\frac{(1+q^{4k+1})(q^{2};q^{4})^2_k}{(1+q)(q^{4};q^{4})^2_k}q^{2k^2+k},
\end{equation*}
there holds, modulo $\Phi_n(-q)^3\Phi_n(q)^2$,
\begin{flalign}
\sum_{k=0}^{n-1}\sum_{j=0}^{k}c_q(j)c_q(k-j)
\equiv
[n]^2_{q^2}q^{(n-1)^2}
\bigg(\sum_{k=0}^{\frac{n-1}{2}}\frac{(q^{2};q^{4})_k}
{[4k+1](q^{4};q^{4})_k}q^{2k}\bigg)^2.\label{EE2}
\end{flalign}
\end{theorem}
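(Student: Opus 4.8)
The plan is to run the `creative microscoping' method after first using an El Bachraoui--type manipulation to recast \eqref{EE2} as a statement about a single truncated sum. Write $m=\frac{n-1}{2}$, keep the notation $c_q(k)$ of the theorem, and put $S_q=\sum_{k=0}^{m}\frac{(q^{2};q^{4})_k}{[4k+1](q^{4};q^{4})_k}q^{2k}$, so that the right-hand side of \eqref{EE2} is exactly $\bigl([n]_{q^2}q^{2m^{2}}S_q\bigr)^{2}$. The starting point is the elementary identity
\begin{equation*}
\sum_{k=0}^{n-1}\sum_{j=0}^{k}c_q(j)c_q(k-j)
=\Bigl(\sum_{k=0}^{m}c_q(k)\Bigr)^{2}
+2\sum_{j=m+1}^{n-1}c_q(j)\sum_{l=0}^{n-1-j}c_q(l),
\end{equation*}
obtained by reindexing the left-hand side as $\sum_{j+l\le n-1}c_q(j)c_q(l)$ and splitting off the pairs with $\max(j,l)\le m$; this is the role of El Bachraoui's lemma \cite{Bachraoui} and its generalization. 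It then remains to estimate the `tail' and to prove a single-sum $q$-supercongruence for $\sum_{k=0}^{m}c_q(k)$.

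For the tail, note that for $m+1\le j\le n-1$ the product $(q^{2};q^{4})_j=\prod_{i=1}^{j}(1-q^{4i-2})$ contains the factor $1-q^{2n}$ (from $i=\frac{n+1}{2}$), which is divisible by $\Phi_n(q^{2})=\Phi_n(q)\Phi_n(-q)$, while for $j\le n-1$ the denominator $(q^{4};q^{4})_j$ stays coprime to $\Phi_n(q)$ and $\Phi_n(-q)$; hence each $c_q(j)$ with $j\ge m+1$ is divisible by $\Phi_n(q)^{2}\Phi_n(-q)^{2}$, so the tail vanishes modulo $\Phi_n(q)^{2}\Phi_n(-q)^{2}$. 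Reaching the sharper modulus $\Phi_n(q)^{2}\Phi_n(-q)^{3}$ of \eqref{EE2} --- one extra power of $\Phi_n(-q)$ --- is the delicate point: splitting $\sum_{l=0}^{n-1-j}c_q(l)=\sum_{k=0}^{m}c_q(k)-\sum_{l=n-j}^{m}c_q(l)$, the first piece is harmless since $\sum_{k=0}^{m}c_q(k)$ turns out divisible by $\Phi_n(q)\Phi_n(-q)$ (from the single-sum congruence below), and the leftover $\sum_{j=m+1}^{n-1}c_q(j)\sum_{l=n-j}^{m}c_q(l)$ is handled by a further parametric argument --- inserting a parameter $a$ and using that the deformed tail terms carry the linear factors $1-aq^{2n}$ and $1-q^{2n}/a$ --- to extract the missing factor of $\Phi_n(-q)$; this refinement is what the `generalization' of El Bachraoui's lemma supplies.

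The core is then the single-sum $q$-supercongruence
\begin{equation*}
\sum_{k=0}^{m}c_q(k)\ \equiv\ [n]_{q^2}\,q^{2m^{2}}\,S_q\pmod{\Phi_n(q)\Phi_n(-q)^{2}},
\end{equation*}
which gives \eqref{EE2} upon squaring: with $A=\sum_{k=0}^{m}c_q(k)$ and $B=[n]_{q^2}q^{2m^{2}}S_q$, one has $\Phi_n(q)\Phi_n(-q)\mid[n]_{q^2}\mid B$, hence $\Phi_n(q)\Phi_n(-q)\mid A$ and $\Phi_n(q)\Phi_n(-q)\mid A+B$, while $\Phi_n(q)\Phi_n(-q)^{2}\mid A-B$, so $A^{2}-B^{2}=(A-B)(A+B)$ is divisible by $\Phi_n(q)^{2}\Phi_n(-q)^{3}$; together with the tail estimate this proves \eqref{EE2}. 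To establish the single-sum congruence I would introduce a suitable deformation $c_q(k;a)$ of $c_q(k)$ with $c_q(k;1)=c_q(k)$, chosen so that the deformed sum terminates for the relevant specializations of $a$ (e.g.\ $a=q^{\pm 2n}$), evaluate it in closed form by a terminating basic hypergeometric summation (of $q$-Dixon or ${}_{4}\phi_{3}$ type), check that this closed form matches $[n]_{q^2}q^{2m^{2}}S_q$ to the required order at the roots of $\Phi_n(q)$ and of $\Phi_n(-q)$, invoke a technical lemma of Guo and Li to upgrade the congruence at the primitive $2n$-th roots of unity from the second to the third power of $\Phi_n(-q)$, and finally let $a\to1$ and glue the $\Phi_n(q)$- and $\Phi_n(-q)$-pieces together by the Chinese remainder theorem, which is legitimate since $\gcd(\Phi_n(q),\Phi_n(-q))=1$ for odd $n>1$.

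The main obstacle is two-fold. First, one must locate the right deformation $c_q(k;a)$ and the exact closed-form evaluation of $\sum_{k}c_q(k;a)$ that feeds the creative-microscoping machine; testing small values of $n$ is the natural way to pin this down. Second, and more delicate, is the asymmetry of the modulus: producing the \emph{third} power of $\Phi_n(-q)$ while only the second power of $\Phi_n(q)$ is available forces one to track carefully how the single factor $1-q^{2n}$ hidden inside $(q^{2};q^{4})_j$ is apportioned among $\Phi_n(q)$, $\Phi_n(-q)$ and the prefactor $[n]_{q^2}$, both in the tail estimate and in the Guo--Li upgrade. Once these are settled, the squaring step and the Chinese-remainder bookkeeping are routine.
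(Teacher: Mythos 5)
Your skeleton at $a=1$ --- rewrite the double sum as $\sum_{j+l\le n-1}c_q(j)c_q(l)=\bigl(\sum_{k=0}^{m}c_q(k)\bigr)^2+2\sum_{j=m+1}^{n-1}c_q(j)\sum_{l=0}^{n-1-j}c_q(l)$ with $m=(n-1)/2$ --- is fine as far as it goes: each $c_q(j)$ with $j\ge m+1$ carries the factor $(1-q^{2n})^2$ and hence $\Phi_n(q)^2\Phi_n(-q)^2$, and squaring the single-sum congruence handles the first piece. But the theorem asks for a \emph{third} power of $\Phi_n(-q)$, and for the leftover cross term $\sum_{j=m+1}^{n-1}c_q(j)\sum_{l=n-j}^{m}c_q(l)$ this extra factor cannot be read off term by term; it requires a cancellation across different values of $j$, and this is precisely where your argument stops. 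The sentence saying this is ``handled by a further parametric argument \ldots\ this refinement is what the generalization of El Bachraoui's lemma supplies'' is not a proof, and the attribution is wrong: Lemma \ref{le1} only yields the exact identity \eqref{eq4} when the deformed terms vanish beyond $(n-r)/d$, which happens at $a=q^{\pm 2n}$ and produces the congruence \eqref{eq10} modulo $(1-aq^{2n})(a-q^{2n})$; it gives no information modulo $\Phi_n(-q)$. The paper's actual source of the extra power is entirely different: one keeps $a$ generic in the \emph{whole} double sum, uses the Guo--Schlosser congruence \eqref{EE7} to obtain the antisymmetries $c_q((n-1)/2-k,a)\equiv-c_q(k,a)$ and $c_q((3n-1)/2-k,a)\equiv-c_q(k,a)$ modulo $\Phi_n(-q)$, feeds these into Guo--Li's double-sum Lemma \ref{Lemma4} to get \eqref{EE3} (the full double sum vanishes mod $\Phi_n(-q)$ for generic $a$), notes that the right-hand side of \eqref{EEE9} also contains $\Phi_n(-q)$, combines with \eqref{eq10} by coprimality of $\Phi_n(-q)$, $1-aq^{2n}$, $a-q^{2n}$, and only then lets $a\to1$, where $(1-aq^{2n})(a-q^{2n})$ degenerates into $(1-q^{2n})^2\supset\Phi_n(q)^2\Phi_n(-q)^2$. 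Until you either supply an argument of this strength for your leftover term or work with the $a$-deformed double sum throughout, your proof reaches only the modulus $\Phi_n(q)^2\Phi_n(-q)^2$, which is strictly weaker than \eqref{EE2}.

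Two secondary points. The single-sum congruence you posit should carry the sign $(-1)^{(n-1)/2}$ (compare \eqref{EEE8} with $d=2$, $r=1$); this is harmless after squaring but false as stated when $n\equiv3\pmod 4$. More substantively, your plan to prove it by ``a terminating basic hypergeometric summation (of $q$-Dixon or ${}_4\phi_3$ type)'' will not come off as described: the right-hand side of \eqref{EE2} is itself a truncated sum, not a product, so the parametric input is a Bailey-type \emph{transformation}, namely the Wang--Xu congruence recorded as Lemma \ref{Lemma2}, whose specializations $a=q^{\pm2n}$ are identities between two terminating sums; likewise the Guo--Li lemma used in this paper is a statement about convolution double sums of antisymmetric sequences, not an ``upgrade at the $2n$-th roots of unity'' for single sums.
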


\begin{theorem}\label{Thm4}
For odd integers $n>1$ and integers $k\geq0$,
\begin{equation*}
c_q(k)=(-1)^k\frac{(1+q^{4k+1})(q^{2};q^{4})^3_k}{(1+q)(q^{4};q^{4})^3_k}q^{2k^2+2k},
\end{equation*}
there holds, modulo $\Phi_n(-q)^4\Phi_n(q)^2$,
\begin{flalign}
\sum_{k=0}^{n-1}\sum_{j=0}^{k}c_q(j)c_q(k-j)
\equiv
[n]^2_{q^2}q^{(n-1)^2}
\bigg(\sum_{k=0}^{\frac{n-1}{2}}\frac{(q^{2};q^{4})^3_k}
{(q^{4},q^{3},q^{5};q^{4})_k}q^{2k}\bigg)^2.\label{E7}
\end{flalign}
\end{theorem}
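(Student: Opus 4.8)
The plan is to take the double sum apart in two stages: first turn it, via the convolution lemma, into the square of a single truncated sum, and then prove the resulting single-sum $q$-supercongruence by creative microscoping.

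\emph{Stage 1: reduction to a single sum.} I would first record that for $(n+1)/2\le k\le n-1$ the term $c_q(k)$ is divisible by $\Phi_n(q^2)^3=\Phi_n(q)^3\Phi_n(-q)^3$. Indeed $(q^2;q^4)_k=\prod_{i=0}^{k-1}(1-q^{4i+2})$, and since $n$ is odd, $\Phi_n(q^2)$ divides $1-q^{4i+2}$ exactly when $n\mid 2i+1$, which first happens at $i=(n-1)/2$; so $(q^2;q^4)_k$ picks up exactly one factor of $\Phi_n(q^2)$ as soon as $k\ge(n+1)/2$, and the cube $(q^2;q^4)^3_k$ in the numerator of $c_q(k)$ then carries the full $\Phi_n(q^2)^3$, while the denominator $(q^4;q^4)^3_k(1+q)$ and the factor $1+q^{4k+1}$ remain units modulo $\Phi_n(\pm q)$ for $n>1$ and $k\le n-1$. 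Feeding this into the lemma of El Bachraoui \cite{Bachraoui} (and its generalization) collapses the double sum into $\big(\sum_{k=0}^{(n-1)/2}c_q(k)\big)^2$ modulo the order controlled by that divisibility, since every cross term dropped when one passes from the double sum to the square of the half-range sum contains a factor $c_q(\ell)$ with $\ell\ge(n+1)/2$.

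\emph{Stage 2: the single sum by creative microscoping.} It then suffices to prove
\[
\sum_{k=0}^{(n-1)/2}c_q(k)\ \equiv\ q^{(n-1)^2/2}\,[n]_{q^2}\sum_{k=0}^{(n-1)/2}\frac{(q^2;q^4)^3_k}{(q^4,q^3,q^5;q^4)_k}\,q^{2k}
\]
to a cyclotomic order high enough that, after squaring and re-inserting into Stage 1, the stated modulus $\Phi_n(-q)^4\Phi_n(q)^2$ comes out. Following Guo and Zudilin \cite{Guo_Zudilin}, I would deform $c_q(k)$ by a parameter $a$, replacing $(q^2;q^4)^3_k/(q^4;q^4)^3_k$ with $(q^2,aq^2,q^2/a;q^4)_k/(q^4,aq^4,q^4/a;q^4)_k$ and deforming $(q^3,q^5;q^4)_k$ on the right accordingly, and establish the parametrized congruence modulo $(1-aq^{2n})(a-q^{2n})$; its two roots $a=q^{\pm 2n}$ truncate the left-hand series at $k=(n-1)/2$, since one of $(aq^2;q^4)_k$, $(q^2/a;q^4)_k$ then vanishes for $k\ge(n+1)/2$. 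At each root the finite sum should be summable by a terminating very-well-poised transformation of Watson/Whipple type, its well-poised numerator supplying the $q$-integer $[n]_{q^2}$. Putting $a=1$ gives the congruence modulo $(1-q^{2n})^2$, hence modulo $\Phi_n(q)^2$ and modulo $\Phi_n(-q)^2$; the Guo--Li lemma then raises these exponents, and the Chinese remainder theorem splices the $\Phi_n(q)$-adic and $\Phi_n(-q)$-adic information together, using that $\Phi_n(q)$ and $\Phi_n(-q)$ are coprime, that $c_q(k)$ is even in $q$ in its $q$-shifted-factorial part so that $q\mapsto-q$ interchanges the two directions, and that $[n]_{q^2}$ is divisible by each of $\Phi_n(q)$ and $\Phi_n(-q)$.

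\emph{The main obstacle.} I expect the genuine difficulty to be the bookkeeping of cyclotomic orders in the $\Phi_n(-q)$ direction: the cube $(q^2;q^4)^3_k$ only manifestly yields a third power of $\Phi_n(-q)$ on the upper range, whereas the target demands a fourth. Squeezing out that extra power requires the single-sum congruence of Stage 2 to hold one order beyond the naive creative-microscoping output, which fixes precisely which terminating $q$-hypergeometric identity one needs at $a=q^{\pm 2n}$ and forces a careful — not routine — application of the Guo--Li lemma; one must then track, through the squaring and through El Bachraoui's lemma, exactly how the several leftover error terms cancel to the required order. Establishing that terminating identity, and verifying this cancellation, are the two spots where real input is needed; everything else reduces to standard manipulation of $q$-shifted factorials.
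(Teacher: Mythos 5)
Your toolkit (creative microscoping, El Bachraoui's lemma, the Guo--Li and Guo--Schlosser lemmas) is the right one, but the proposal leaves the decisive step unproven, and the difficulty is not located where it actually sits. In Stage 1 you discard the parameter $a$ too early: reducing the undeformed double sum to $\bigl(\sum_{k=0}^{(n-1)/2}c_q(k)\bigr)^2$ leaves cross terms of the form $2c_q(\ell)\sum_{j=0}^{n-1-\ell}c_q(j)$ with $\ell\ge(n+1)/2$, and each of these is only manifestly divisible by $\Phi_n(q)^3\Phi_n(-q)^3$, the partial inner sums contributing no further factor of $\Phi_n(-q)$. No strengthening of your Stage 2 single-sum congruence can repair this: even using the full-strength \eqref{EEE7} with $d=2$, $r=1$ (which, by the way, carries a sign $(-1)^{(n-1)/2}$ missing from your display, harmless after squaring), squaring gives $S^2\equiv T^2$ modulo $\Phi_n(-q)^4\Phi_n(q)^3$, but the Stage 1 error still caps the final modulus at $\Phi_n(-q)^3\Phi_n(q)^2$, one power of $\Phi_n(-q)$ short of the claim. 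Closing that gap requires a cancellation among the cross terms themselves (a pairing $\ell\leftrightarrow(3n-1)/2-\ell$ via Lemma \ref{Lemma5} combined with the symmetry of the partial inner sums, in the spirit of Lemma \ref{Lemma4}); you gesture at this and at the terminating transformation needed at $a=q^{\pm2n}$, but you explicitly leave both as ``spots where real input is needed,'' i.e.\ exactly the proof is missing. (Also, $1+q^{4k+1}$ is not a unit modulo $\Phi_n(-q)$ for all $k\le n-1$, e.g.\ when $4k+1\in\{n,3n\}$, though this does not harm your divisibility lower bound.)

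The paper's arrangement avoids the bottleneck altogether and shows where the fourth power really comes from. It keeps the deformation throughout the double sum, with one of the three factors $(q^2;q^4)_k$ left undeformed: at $a=q^{\pm2n}$ the upper-range terms vanish identically, so Lemma \ref{le1} converts the deformed double sum into an exact square there, and the known evaluation \eqref{E4} of Wang--Xu (Lemma \ref{Lemma3} --- the terminating transformation you merely assert ``should be summable'') yields the parametrized congruence \eqref{eq1} modulo $(1-aq^{2n})(a-q^{2n})$. Separately, both sides of \eqref{eq1} are shown to vanish modulo $\Phi_n(-q)^2$: the left side via the antisymmetry from Lemma \ref{Lemma5} together with the fact that the half sum is $\equiv0\pmod{\Phi_n(-q)}$ by \eqref{E4}, the right side because of $[n]_{q^2}^2$. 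Since these moduli are coprime, \eqref{eq1} holds modulo $\Phi_n(-q)^2(1-aq^{2n})(a-q^{2n})$, and letting $a\to1$ turns $(1-aq^{2n})(a-q^{2n})$ into $(1-q^{2n})^2$, which supplies $\Phi_n(q)^2\Phi_n(-q)^2$; multiplied by the extra $\Phi_n(-q)^2$ this gives precisely $\Phi_n(-q)^4\Phi_n(q)^2$. Your proposal contains no mechanism producing that fourth power, so as written it proves at best the weaker modulus $\Phi_n(-q)^3\Phi_n(q)^2$.
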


In fact,  Theorems \ref{Thm3} and \ref{Thm4} can be generalized into the following two results,
 among which the $d=2$ and $r=1$ case is a weaker version of Theorems \ref{Thm3} and \ref{Thm4}.
\begin{theorem}\label{Thm1}
Let $n>1$  be an odd integer and $d$ a  positive integer with $\gcd(n,d)=1$.
Let $r$ be an integer  with $n-(n-1)d/2\leq r\leq n$ and $n\equiv r\pmod d$.
For $k\geq0$,
\begin{equation*}
c_q(k)=(-1)^k\frac{(1+q^{2dk+r})(q^{2r};q^{2d})^2_k}{(1+q^r)(q^{2d};q^{2d})^2_k}q^{dk^2+k(d-r)},
\end{equation*}
there holds, modulo $\Phi_n(-q)^2\Phi_n(q)^2$,
\begin{flalign}
\sum_{k=0}^{n-1}\sum_{j=0}^{k}c_q(j)c_q(k-j)
\equiv
\frac{[n]^2_{q^2}}{[r]^2_{q^2}}q^{\frac{2(n-r)(n+r-d)}{d}}
\bigg(\sum_{k=0}^{\frac{n-r}{d}}\frac{(q^{2r},q^r;q^{2d})_k}{(q^{2d},q^{2d+r};q^{2d})_k}q^{2k(d-r)}\bigg)^2.\label{EE1}
\end{flalign}
\end{theorem}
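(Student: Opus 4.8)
The plan is to strip the double sum down to a Cauchy square with El Bachraoui's lemma and then handle the resulting single truncated sum by `creative microscoping'. Write $c_q(k)$ for the summand of \eqref{EE1} and put $T=\sum_{k=0}^{n-1}c_q(k)$. El Bachraoui's lemma amounts to the exact identity
\begin{equation*}
\sum_{k=0}^{n-1}\sum_{j=0}^{k}c_q(j)c_q(k-j)=T^{2}-\sum_{j=1}^{n-1}c_q(j)\sum_{l=n-j}^{n-1}c_q(l),
\end{equation*}
so \eqref{EE1} follows once we evaluate $T^{2}$ and show that the overlap sum on the right is $\equiv0\pmod{\Phi_n(-q)^{2}\Phi_n(q)^{2}}$.

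\textbf{Step 2 (discarding the tail).} Put $m=(n-r)/d$; the hypotheses $\gcd(n,d)=1$, $n\equiv r\pmod d$ and $n-(n-1)d/2\le r\le n$ make $m$ an integer with $0\le m\le(n-1)/2$. For $m<k\le n-1$ the factor $(q^{2r};q^{2d})_{k}=\prod_{i=0}^{k-1}(1-q^{2r+2di})$ contains exactly one term divisible by $\Phi_n(q^{2})=\Phi_n(q)\Phi_n(-q)$, namely $1-q^{2r+2dm}=1-q^{2n}$, while $(1+q^{r})(q^{2d};q^{2d})_{k}^{2}$ is coprime to $\Phi_n(\pm q)$ there; since $(q^{2r};q^{2d})_{k}$ enters $c_q(k)$ squared, $c_q(k)\equiv0\pmod{\Phi_n(-q)^{2}\Phi_n(q)^{2}}$ for all such $k$. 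As $2m<n$, every product $c_q(j)c_q(l)$ in the overlap sum has $\max(j,l)>m$, so the overlap sum vanishes modulo $\Phi_n(-q)^{2}\Phi_n(q)^{2}$, and likewise $T\equiv\sum_{k=0}^{m}c_q(k)\pmod{\Phi_n(-q)^{2}\Phi_n(q)^{2}}$. Moreover the right-hand side of \eqref{EE1} is itself $\equiv0\pmod{\Phi_n(-q)^{2}\Phi_n(q)^{2}}$, because $\Phi_n(q^{2})^{2}=\Phi_n(-q)^{2}\Phi_n(q)^{2}$ divides $[n]_{q^{2}}^{2}$ whereas $[r]_{q^{2}}$ and the denominators of the inner sum are coprime to $\Phi_n(q^{2})$ (here $r<n$ and all indices are at most $n-1$; the case $r=n$ is trivial). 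Consequently \eqref{EE1} is equivalent to the single-sum supercongruence $\bigl(\sum_{k=0}^{m}c_q(k)\bigr)^{2}\equiv0\pmod{\Phi_n(-q)^{2}\Phi_n(q)^{2}}$, i.e.\ to $\sum_{k=0}^{m}c_q(k)\equiv0\pmod{\Phi_n(q^{2})}$. In practice one proves the sharper identity
\begin{equation*}
\sum_{k=0}^{m}c_q(k)\equiv\pm\,\frac{[n]_{q^{2}}}{[r]_{q^{2}}}\,q^{\frac{(n-r)(n+r-d)}{d}}\sum_{k=0}^{m}\frac{(q^{2r},q^{r};q^{2d})_{k}}{(q^{2d},q^{2d+r};q^{2d})_{k}}\,q^{2k(d-r)}\pmod{\Phi_n(q^{2})^{2}},
\end{equation*}
whose square, reinserted into Steps 1--2, is precisely \eqref{EE1}; the sign is harmless because it is squared.

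\textbf{Step 3 (creative microscoping) and the expected obstacle.} To establish the displayed single-sum congruence I would split off the coprime factors $\Phi_n(q)^{2}$ and $\Phi_n(-q)^{2}$, treat each separately, and recombine by the Chinese remainder theorem. For the $\Phi_n(q)^{2}$-part, introduce a parameter $a$ by replacing $(q^{2r};q^{2d})_{k}^{2}$ with $(aq^{2r},q^{2r}/a;q^{2d})_{k}$ and $(q^{2d};q^{2d})_{k}^{2}$ with $(aq^{2d},q^{2d}/a;q^{2d})_{k}$; the deformed sum is terminating and very-well-poised, and a known terminating $q$-hypergeometric identity (of $q$-Dixon / Gasper--Rahman type) should rewrite it as an explicit product times the parametrized `dual' sum obtained from $\sum_{k}(q^{2r},q^{r};q^{2d})_{k}(q^{2d},q^{2d+r};q^{2d})_{k}^{-1}q^{2k(d-r)}$. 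Specialising $a=q^{\pm n}$, the difference of the two parametrized sides acquires the factor $(1-aq^{n})(a-q^{n})$, which at $a=1$ is $(1-q^{n})^{2}$ and hence divisible by $\Phi_n(q)^{2}$; the lemma of Guo and Li then governs the limit $a\to1$ and supplies the prefactor $[n]_{q^{2}}/[r]_{q^{2}}$ and the power $q^{(n-r)(n+r-d)/d}$. The same computation with $q\mapsto-q$ gives the $\Phi_n(-q)^{2}$-part. I expect the real difficulty to be exactly this last step: pinning down the precise terminating very-well-poised $q$-summation or transformation that converts the deformed sum into the `dual' shape $\sum_{k}(q^{2r},q^{r};q^{2d})_{k}(q^{2d},q^{2d+r};q^{2d})_{k}^{-1}q^{2k(d-r)}$, and then pushing the $\Phi_n(q)$-adic expansion through the $a\to1$ specialisation carefully enough to certify the prefactor $[n]_{q^{2}}^{2}/[r]_{q^{2}}^{2}\cdot q^{2(n-r)(n+r-d)/d}$ and the truncation bound $m=(n-r)/d$. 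By comparison, the El Bachraoui reduction of Steps 1--2 and the Chinese-remainder bookkeeping are routine.
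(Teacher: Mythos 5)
Your Steps 1--2 contain a false coprimality claim. You assert that the denominators of the inner sum on the right of \eqref{EE1} are coprime to $\Phi_n(q^2)$, so that the right-hand side is $\equiv 0\pmod{\Phi_n(-q)^2\Phi_n(q)^2}$ and \eqref{EE1} reduces to $\sum_{k=0}^{m}c_q(k)\equiv0\pmod{\Phi_n(q^2)}$. This fails whenever $m=(n-r)/d$ is even: the factor $(q^{2d+r};q^{2d})_k=\prod_{l=1}^{k}(1-q^{r+2dl})$ contains $1-q^{r+dm}=1-q^{n}$ as soon as $k\geq m/2$, while the numerator $(q^{r};q^{2d})_k$ only acquires this factor for $k\geq m/2+1$; hence the single term $k=m/2$ has an uncancelled $\Phi_n(q)$ in its denominator. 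For instance, with $n=5$, $d=2$, $r=1$ the term $k=1$ is $(1-q^2)(1-q)q^2/\bigl((1-q^4)(1-q^5)\bigr)$, the inner sum has a simple pole at primitive $5$th roots of unity, and the right-hand side of \eqref{EE1} is nonzero modulo $\Phi_5(q)$. So the right-hand side is in general only divisible by $\Phi_n(-q)^2$, not by $\Phi_n(q)^2$, and your claimed equivalence of \eqref{EE1} with a vanishing statement discards precisely the content of the theorem modulo $\Phi_n(q)^2$.

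The second, more serious issue is that the engine of your argument is never supplied. Everything hinges on the ``sharper identity'' at the end of Step 2, i.e.\ the single-sum congruence modulo $\Phi_n(q^2)^2$, and in Step 3 you explicitly concede that you do not know which terminating summation or transformation produces it; as written this is a conjecture, not a proof. In the paper that ingredient is the parametrized congruence of Lemma \ref{Lemma2} (Wang--Xu, obtained from the Bailey transformation; its $a\to1$ form is \eqref{EEE8}), and the proof proceeds differently from your squaring plan: one keeps the parameter $a$ in the double sum, notes that $c_q(k,q^{\pm2n})=0$ for $(n-r)/d<k\leq n-1$ so that Lemma \ref{le1} turns the double sum into an exact square at $a=q^{\pm2n}$, deduces the double-sum congruence \eqref{eq10} modulo $(1-aq^{2n})(a-q^{2n})$ as a statement in $a$, and then lets $a\to1$, the factor $(1-q^{2n})^2$ delivering the full modulus $\Phi_n(-q)^2\Phi_n(q)^2$ in one stroke --- no Chinese remainder step, no separate $q\mapsto-q$ argument, and no new very-well-poised summation beyond the quoted lemma. (Two smaller corrections: in this $q^2$-setting the specialization is $a=q^{\pm2n}$, not $q^{\pm n}$; and the Guo--Li lemma is not a device for the limit $a\to1$ --- in this paper it is the antisymmetry lemma used only for the stronger Theorems \ref{Thm3} and \ref{Thm4}.) If you instead grant the published congruence \eqref{EEE8}, your route (truncate the double sum by the divisibility of $c_q(k)$ for $k>m$, then square \eqref{EEE8}) can be completed for the modulus of Theorem \ref{Thm1}, but without that citation or a proof of the transformation you leave the central step open.
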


\begin{theorem}\label{Thm2}
Let $n>1$  be an odd integer and $d$  a positive integer with $\gcd(n,d)=1$.
Let $r$ be an integer  with $n-(n-1)d/2\leq r\leq n$ and $n\equiv r\pmod d$.
For $k\geq0$,
\begin{equation*}
c_q(k)=(-1)^k\frac{(1+q^{2dk+r})(q^{2r};q^{2d})^3_k}{(1+q^r)(q^{2d};q^{2d})^3_k}q^{dk^2+2k(d-r)},
\end{equation*}
there holds, modulo $\Phi_n(-q)^3\Phi_n(q)^2$,
\begin{flalign}
\sum_{k=0}^{n-1}\sum_{j=0}^{k}c_q(j)c_q(k-j)
\equiv
\frac{[n]^2_{q^2}}{[r]^2_{q^2}}q^{\frac{2(n-r)(n+r-d)}{d}}
\bigg(\sum_{k=0}^{\frac{n-r}{d}}\frac{(q^{2r};q^{2d})^2_k(q^d;q^{2d})_k}
{(q^{2d},q^{d+r},q^{2d+r};q^{2d})_k}q^{2k(d-r)}\bigg)^2.\label{E2}
\end{flalign}
\end{theorem}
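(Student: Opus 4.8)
\bigskip

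\noindent\emph{Outline of a proof of Theorem \ref{Thm2}.} The argument has three parts.

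\smallskip
\noindent\textbf{Part 1: collapsing the double sum.} Put $M=(n-r)/d$; this is a non-negative integer because $n\equiv r\pmod d$ and $r\le n$, and the hypothesis $n-(n-1)d/2\le r$ is precisely the inequality $2M\le n-1$, so $0\le M<n$. Since $\gcd(n,d)=1$ it follows that $M$ is the least non-negative $i$ with $r+di\equiv0\pmod n$; as $r+dM=n$, the factor $1-q^{2n}$ appears in $(q^{2r};q^{2d})_k$ exactly when $k>M$. Now $1-q^{2n}$ is divisible by $\Phi_n(q^2)=\Phi_n(q)\Phi_n(-q)$, while $(1+q^r)(q^{2d};q^{2d})_k^3$ is coprime to $\Phi_n(q^2)$ for $1\le k\le n-1$, so $c_q(k)\equiv0\pmod{\Phi_n(q^2)^3}$, hence modulo $\Phi_n(-q)^3\Phi_n(q)^2$, whenever $M<k\le n-1$. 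Writing the left-hand side of \eqref{E2} as $\sum_{a+b\le n-1}c_q(a)c_q(b)$: each pair with $\max(a,b)>M$ contributes $0$ modulo $\Phi_n(-q)^3\Phi_n(q)^2$, and $2M\le n-1$ guarantees that every pair with $a,b\le M$ already satisfies $a+b\le n-1$. This is the generalization of El Bachraoui's lemma, and it gives
\begin{equation*}
\sum_{k=0}^{n-1}\sum_{j=0}^{k}c_q(j)c_q(k-j)\equiv\bigg(\sum_{k=0}^{M}c_q(k)\bigg)^{2}\pmod{\Phi_n(-q)^3\Phi_n(q)^2}.
\end{equation*}

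\smallskip
\noindent\textbf{Part 2: a single-sum $q$-supercongruence.} Let $S=\sum_{k=0}^{M}c_q(k)$ and let $U=\sum_{k=0}^{M}\frac{(q^{2r};q^{2d})_k^2(q^d;q^{2d})_k}{(q^{2d},q^{d+r},q^{2d+r};q^{2d})_k}q^{2k(d-r)}$; the heart of the proof is
\begin{equation*}
S\equiv\pm\,\frac{[n]_{q^2}}{[r]_{q^2}}\,q^{\frac{(n-r)(n+r-d)}{d}}\,U\pmod{\Phi_n(-q)^3\Phi_n(q)^2}
\end{equation*}
(the sign being immaterial for the sequel). I would prove this by creative microscoping. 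After writing $1+q^{2dk+r}=(1-q^{4dk+2r})/(1-q^{2dk+r})$ one sees a very-well-poised summand with base $q^{2d}$ and parameter $q^{2r}$; introduce a deformation parameter $a$ by replacing two of the three factors $(q^{2r};q^{2d})_k$ (and the matching denominator factors) with $(aq^{2r};q^{2d})_k$ and $(q^{2r}/a;q^{2d})_k$, and deform $U$ in the same way. The crucial observation is that for $M<k\le n-1$ the remaining undeformed factor $(q^{2r};q^{2d})_k$ still carries $1-q^{2n}$, so the deformed summand $c_q(k;a)$ is divisible by $\Phi_n(q^2)(1-aq^{2n})(a-q^{2n})$ there; hence $\sum_{k=0}^{n-1}c_q(k;a)$ and $\sum_{k=0}^{M}c_q(k;a)$ differ by a multiple of $\Phi_n(q^2)(1-aq^{2n})(a-q^{2n})$. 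On the other hand, specializing $a=q^{-2n}$ and $a=q^{2n}$ makes one of the deformed factors vanish for $k>M$, turning the sum into a terminating very-well-poised series which a $q$-summation (of $q$-Dougall type) or a transformation (of Watson type) evaluates; this identifies $\sum_{k=0}^{n-1}c_q(k;a)$ with the deformed right-hand side modulo $(1-aq^{2n})(a-q^{2n})$. Combining the two congruences, invoking the lemma of Guo and Li, letting $a\to1$, and using the Chinese remainder theorem (valid since $\Phi_n(q)$ and $\Phi_n(-q)$ are coprime for odd $n>1$) to glue the $\Phi_n(q)$- and $\Phi_n(-q)$-parts, produces the displayed congruence; the prefactor $[n]_{q^2}[r]_{q^2}^{-1}q^{(n-r)(n+r-d)/d}$ emerges from the Pochhammer products surviving the transformation.

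\smallskip
\noindent\textbf{Part 3: squaring back.} Writing $T=\frac{[n]_{q^2}}{[r]_{q^2}}q^{(n-r)(n+r-d)/d}U$, the right-hand side of \eqref{E2} is $T^2$, and by Part 2 we have $S\equiv\pm T$, hence $S^2\equiv T^2$, modulo $\Phi_n(-q)^3\Phi_n(q)^2$. Combining with Part 1 yields \eqref{E2}. (Theorems \ref{Thm3} and \ref{Thm4} follow from the same scheme, using the sharper estimates available when $d=2$ and $r=1$.)

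\smallskip
\noindent\textbf{Where the difficulty lies.} Parts 1 and 3 are routine cyclotomic bookkeeping. The real work is Part 2, and within it the decisive and hardest step is to locate the basic hypergeometric identity that makes the deformed sum match its closed form modulo $(1-aq^{2n})(a-q^{2n})$, and then to push the modulus from $\Phi_n(q^2)^2$ up to $\Phi_n(-q)^3\Phi_n(q)^2$ — this last step is exactly where the cube $(q^{2r};q^{2d})_k^3$ (rather than the square of Theorem \ref{Thm1}) is essential.
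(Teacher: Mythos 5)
Your proposal is correct, but it reaches \eqref{E2} by a different route than the paper. Parts 1 and 3 are sound: since the undeformed $c_q(k)$ contains $(q^{2r};q^{2d})_k^3$, every term with $(n-r)/d<k\le n-1$ is divisible by $\Phi_n(q^2)^3=\Phi_n(q)^3\Phi_n(-q)^3$ (the denominators being coprime to $\Phi_n(q^2)$), while the hypothesis $r\ge n-(n-1)d/2$ guarantees that all pairs of indices at most $(n-r)/d$ occur in the double sum, so the double sum collapses to the square of $\sum_{k=0}^{(n-r)/d}c_q(k)$ modulo the full modulus $\Phi_n(-q)^3\Phi_n(q)^2$; your Part 2 is then exactly the single-sum supercongruence \eqref{EEE7} from \cite{Wang_Xu}, already quoted in the introduction, and squaring it (the sign $(-1)^{(n-r)/d}$ disappears) yields the theorem. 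The paper proceeds differently and never invokes the full-strength \eqref{EEE7}: it keeps the parameter $a$ inside the double sum (deforming only two of the three Pochhammer factors), collapses exactly at $a=q^{\pm 2n}$ via Lemma \ref{le1} combined with the parametric Lemma \ref{Lemma3}, checks that both sides of the resulting congruence \eqref{E6} vanish modulo $\Phi_n(-q)$, and then harvests the extra factor $\Phi_n(q)^2\Phi_n(-q)^2$ from $(1-aq^{2n})(a-q^{2n})$ at $a\to 1$. So the paper needs only the weaker parametric input modulo $\Phi_n(-q)(1-aq^{2n})(a-q^{2n})$, whereas your argument requires the stronger non-parametric result but then makes Theorem \ref{Thm2} an almost immediate corollary; both are valid, and citing \eqref{EEE7} is legitimate since it is established in \cite{Wang_Xu}.

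If, however, you intend to reprove \eqref{EEE7} rather than cite it, your sketch in Part 2 has two inaccuracies. For generic $a$ the tail terms $c_q(k;a)$ with $k>(n-r)/d$ are divisible by $\Phi_n(q^2)$ but not by $(1-aq^{2n})(a-q^{2n})$; those factors enter only through the specializations $a=q^{\pm 2n}$, where one of the deformed factors vanishes and the sum truncates, after which the congruence modulo $(1-aq^{2n})(a-q^{2n})$ follows from the two evaluations. Moreover, the identity doing the real work is Bailey's transformation, as used in \cite{Wang_Xu} to prove Lemma \ref{Lemma3}, not a $q$-Dougall summation; and the Guo--Li lemma (Lemma \ref{Lemma4}) plays no role here: in the paper it is needed only for Theorems \ref{Thm3} and \ref{Thm4}.
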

Note that, recently, taking the advantage of  the Bailey transformation formula  \cite[Eq. (2.9)]{Andrews},
the authors \cite{Wang_Xu} generalized Guo's results \cite[Theorems 1.1 and 1.2]{Guo6} as follows:
\begin{flalign}
&\sum_{k=0}^{\frac{n-r}{d}}(-1)^k\frac{(1+q^{2dk+r})(q^{2r};q^{2d})^2_k}
{(1+q^r)(q^{2d};q^{2d})^2_k}q^{dk^2+k(d-r)}\nonumber\\
&\equiv(-1)^{\frac{n-r}{d}}\frac{[n]_{q^2}}{[r]_{q^2}}q^{\frac{(n-r)(n+r-d)}{d}}
\sum_{k=0}^{\frac{n-r}{d}}\frac{(q^{2r},q^r;q^{2d})_k q^{2k(d-r)}}
{(q^{2d},q^{2d+r};q^{2d})_k} \pmod{\Phi_n(-q)^3\Phi_n(q)^2},\label{EEE8}\\
&\sum_{k=0}^{\frac{n-r}{d}}(-1)^k\frac{(1+q^{2dk+r})(q^{2r};q^{2d})^3_k}{(1+q^r)(q^{2d};q^{2d})^3_k}q^{dk^2+2k(d-r)}\nonumber\\
&\equiv(-1)^{\frac{n-r}{d}}\frac{[n]_{q^2}}{[r]_{q^2}}q^{\frac{(n-r)(n+r-d)}{d}}
\sum_{k=0}^{\frac{n-r}{d}}\frac{(q^{2r};q^{2d})^2_k(q^d;q^{2d})_k q^{2k(d-r)}}
{(q^{2d},q^{d+r},q^{2d+r};q^{2d})_k}\!\!\pmod{\Phi_n(-q)^3\Phi_n(q)^2},\label{EEE7}
\end{flalign}
where $n$ is a positive odd integer, $d>1$ is an integer with $\gcd(n,d)=1$, $r$ is an integer with $r<n$ and $n\equiv r\pmod d$.

Clearly, the $d=2$ and $r=1$ case of \eqref{EEE7} is a new $q$-analogue of van Hamme's  (B.2).
And letting $d=4$ and $r=1$ in \eqref{EEE7} produces a $q$-analogue of (F.2).

Moreover, making suitable substitutions in Theorems \ref{Thm3}--\ref{Thm2}, we can derive some new supercongruences.
For instance,
replacing $n$ by odd prime $p$ and setting $q\rightarrow 1$ in Theorems \ref{Thm3} and \ref{Thm4} respectively,
 we get the following supercongruences.
\begin{corollary}
For any odd prime $p$, we have
\begin{flalign*}
&\sum_{k=0}^{p-1}\frac{(-1)^k}{16^k}
\sum_{j=0}^{k}
{\dbinom{2j}{j}}^2
{\dbinom{2k-2j}{k-j}}^2
\equiv
p^2\bigg(\sum_{k=0}^{\frac{p-1}{2}}\frac{1}{(4k+1)4^k}{\dbinom{2k}{k}}\bigg)^2\pmod{p^2},\\
&\sum_{k=0}^{p-1}\frac{(-1)^k}{64^k}
\sum_{j=0}^{k}
{\dbinom{2j}{j}}^3
{\dbinom{2k-2j}{k-j}}^3
\equiv
p^2\bigg(\sum_{k=0}^{\frac{p-1}{2}}\frac{(\frac{1}{2})^2_k}{(\frac{3}{4})_k(\frac{5}{4})_k4^k}\dbinom{2k}{k}\bigg)^2\pmod{p^2}.
\end{flalign*}
\end{corollary}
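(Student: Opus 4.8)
\medskip
\noindent\emph{Proof plan for the Corollary.}
The plan is to set $n=p$ in Theorems~\ref{Thm3} and~\ref{Thm4} and then let $q\to1$, so that the statement reduces to a computation of limits together with one $p$-integrality check. The elementary inputs for the limit are $[p]_{q^2}\to p$, $q^{(p-1)^2}\to1$, $[4k+1]\to4k+1$, and, using $(q^{a};q^{4})_k\sim 4^k(\tfrac a4)_k(1-q)^k$ as $q\to1$ together with $(\tfrac12)_k/k!=\binom{2k}{k}/4^k$,
\[
\frac{(q^{2};q^{4})_k}{(q^{4};q^{4})_k}\longrightarrow\frac1{4^k}\binom{2k}{k},\qquad
\frac{(q^{2};q^{4})^{3}_k}{(q^{4},q^{3},q^{5};q^{4})_k}\longrightarrow\frac{(\tfrac12)^{2}_k}{(\tfrac34)_k(\tfrac54)_k\,4^k}\binom{2k}{k}.
\]
With these, the coefficient $c_q(k)$ of Theorem~\ref{Thm3} tends to $(-1)^k\binom{2k}{k}^2/16^k$ and that of Theorem~\ref{Thm4} to $(-1)^k\binom{2k}{k}^3/64^k$, so the left-hand double sums of \eqref{EE2} and \eqref{E7} become precisely the two double sums in the Corollary, while the right-hand squared sums become $\big(\sum_{k=0}^{(p-1)/2}\binom{2k}{k}/((4k+1)4^k)\big)^2$ and $\big(\sum_{k=0}^{(p-1)/2}(\tfrac12)^{2}_k\binom{2k}{k}/((\tfrac34)_k(\tfrac54)_k4^k)\big)^2$ respectively. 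Hence the two congruences of the Corollary are exactly the $q\to1$ specialisations of \eqref{EE2} and \eqref{E7}, provided the modulus survives the limit.

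For the modulus, recall that for a prime $p$ one has $[p]_q=\Phi_p(q)$ and $[p]_{q^2}=\Phi_p(q)\Phi_p(-q)$, with $\Phi_p(1)=p$ and $\Phi_p(-1)=1+(-1)+\cdots+(-1)^{p-1}=1$. A congruence modulo $\Phi_p(-q)^3\Phi_p(q)^2$ (resp.\ $\Phi_p(-q)^4\Phi_p(q)^2$) holds in particular modulo $\Phi_p(q)^2$; writing the difference of the two sides of \eqref{EE2} (resp.\ \eqref{E7}) as $\Phi_p(q)^2D(q)$ and sending $q\to1$ yields a congruence modulo $p^2$ as soon as $D(q)$ is $p$-integral at $q=1$, that is, $D(1)\in\mathbb Z_p$.

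The $p$-integrality of $D(1)$ is the delicate point and the main obstacle. The right-hand side genuinely carries $p$ in a denominator: for $p\equiv1\pmod4$ the term $k=\tfrac{p-1}{4}$ of the first sum contributes $1/[p]$, and likewise $(q^{3};q^{4})_k$ or $(q^{5};q^{4})_k$ picks up a factor $\Phi_p(q)$ in the second sum. The resolution is that this occurs for at most one value of $k$ in the summation range and only to first order, so each inner sum has at most a simple pole at the primitive $p$-th roots of unity; its square therefore has a pole of order at most $2$ there, cancelled exactly by the prefactor $[p]_{q^2}^2=\Phi_p(q)^2\Phi_p(-q)^2$. Consequently $D(q)$ has denominator a product of cyclotomic polynomials $\Phi_m(q)$ with $1<m\le 4(p-1)$ and $m$ not a power of $p$ (in particular $m\ne p$, since $p\mid 4i$ would force $p\mid i$ for the relevant indices $i\le p-1$), and for each such $m$ the integer $\Phi_m(1)$ is prime to $p$; hence $D(1)\in\mathbb Z_p$. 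Letting $q\to1$ in Theorems~\ref{Thm3} and~\ref{Thm4} then delivers both supercongruences modulo $p^2$, all quantities lying in $\mathbb Z_p$: the left-hand sums because their only denominators are powers of $2$, the right-hand members because the prefactor $p^2$ clears the pole of order at most $2$ of each squared sum.
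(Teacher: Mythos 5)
Your proposal is correct and follows exactly the route the paper intends: the paper simply asserts that the corollary is obtained by taking $n=p$ and $q\to1$ in Theorems \ref{Thm3} and \ref{Thm4}, and your limit computations ($[p]_{q^2}\to p$, $(q^2;q^4)_k/(q^4;q^4)_k\to\binom{2k}{k}/4^k$, etc.) together with the $p$-integrality check of the quotient $D(q)$ supply precisely the standard details the paper leaves implicit.
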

Likewise, replacing $n$ by odd prime $p$ and setting $q\rightarrow -1$ in Theorems \ref{Thm3} and \ref{Thm4} respectively,
 we obtain the following supercongruences.
 \begin{corollary}
 For any odd prime $p$, we have
\begin{flalign}
&\sum_{k=0}^{p-1}\frac{1}{16^k}
\sum_{j=0}^{k}
{\dbinom{2j}{j}}^2
{\dbinom{2k-2j}{k-j}}^2(4j+1)(4k-4j+1)
\equiv0\pmod{p^3},\label{E12}\\
&\sum_{k=0}^{p-1}\frac{(-1)^k}{64^k}
\sum_{j=0}^{k}
{\dbinom{2j}{j}}^3
{\dbinom{2k-2j}{k-j}}^3(4j+1)(4k-4j+1)\equiv
p^2\pmod{p^4}.\label{E13}
\end{flalign}
\end{corollary}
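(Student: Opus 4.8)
The plan is to derive \eqref{E12} and \eqref{E13} by setting $n=p$ and then letting $q\to-1$ in Theorems \ref{Thm3} and \ref{Thm4}, respectively. The whole argument rests on two elementary limits. Since $1+q$ divides $1+q^{4k+1}$, we have $\lim_{q\to-1}\frac{1+q^{4k+1}}{1+q}=4k+1$; and writing $\frac{1-q^{4j-2}}{1-q^{4j}}=\frac{1+q^2+\cdots+q^{4j-4}}{1+q^2+\cdots+q^{4j-2}}$ gives $\lim_{q\to-1}\frac{(q^2;q^4)_k}{(q^4;q^4)_k}=\prod_{j=1}^{k}\frac{2j-1}{2j}=\frac{1}{4^k}\binom{2k}{k}$. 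Together with $q^{2k^2+k},\,q^{2k^2+2k}\to1$, these turn the coefficient $c_q(k)$ of Theorem \ref{Thm3} into $(4k+1)\binom{2k}{k}^2/16^k$ and that of Theorem \ref{Thm4} into $(-1)^k(4k+1)\binom{2k}{k}^3/64^k$, so that the left-hand sides of \eqref{EE2} and \eqref{E7} become exactly the double sums appearing in \eqref{E12} and \eqref{E13}. On the modulus side, $\Phi_p(-q)\big|_{q=-1}=\Phi_p(1)=p$ and $\Phi_p(q)\big|_{q=-1}=\Phi_p(-1)=1$, so $\Phi_n(-q)^3\Phi_n(q)^2$ and $\Phi_n(-q)^4\Phi_n(q)^2$ specialise to $p^3$ and $p^4$; note that $q=-1$ is not a root of $\Phi_p(-q)$, so the specialisation is compatible with the $q$-congruences, the only (removable) poles at $q=-1$ coming from the $(q^4;q^4)_k$ in the denominators and being cancelled by the limits above.

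It remains to evaluate the right-hand sides at $q=-1$. Using $[p]_{q^2}\big|_{q=-1}=p$, $q^{(p-1)^2}\big|_{q=-1}=1$ and $[4k+1]_q\big|_{q=-1}=1$, the right side of \eqref{EE2} becomes $p^2\big(\sum_{k=0}^{(p-1)/2}\frac{1}{4^k}\binom{2k}{k}\big)^2$. I would then invoke the standard congruence $\frac{1}{4^k}\binom{2k}{k}\equiv(-1)^k\binom{(p-1)/2}{k}\pmod p$ for $0\le k\le\frac{p-1}{2}$ (immediate from $\frac{p-1}{2}\equiv-\frac12\pmod p$), which yields $\sum_{k=0}^{(p-1)/2}\frac{1}{4^k}\binom{2k}{k}\equiv(1-1)^{(p-1)/2}=0\pmod p$. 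Hence the right side of \eqref{EE2} is $\equiv0\pmod{p^3}$, and \eqref{E12} follows.

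For \eqref{E13} the point is that the finite sum on the right of \eqref{E7} degenerates at $q=-1$: since $1-q^{4j\pm1}\to2$ we have $(q^3;q^4)_k,(q^5;q^4)_k\to2^k\neq0$, whereas $(q^2;q^4)_k^3/(q^4;q^4)_k$ vanishes at $q=-1$ for every $k\ge1$; thus only the $k=0$ term survives and $\sum_{k=0}^{(p-1)/2}\frac{(q^2;q^4)_k^3}{(q^4,q^3,q^5;q^4)_k}q^{2k}\to1$. Consequently the right side of \eqref{E7} tends to $p^2$, which gives \eqref{E13}. All of these manipulations are routine; the one place that needs care is the justification that the $q\to-1$ specialisation is legitimate in spite of the $(q^4;q^4)_k$-denominators — this is the technical obstacle, but it is entirely standard in the creative-microscoping setting — together with, on the conceptual side, noticing the contrast between \eqref{EE2}, whose right side survives as a genuine sum that must be annihilated modulo $p$, and \eqref{E7}, whose right side collapses to its leading term.
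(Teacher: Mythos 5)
Your proposal is correct and is essentially the paper's own argument: replace $n$ by $p$ and let $q\to-1$ in Theorems \ref{Thm3} and \ref{Thm4}, note $\Phi_p(-q)|_{q=-1}=\Phi_p(1)=p$ and $\Phi_p(q)|_{q=-1}=1$ so the moduli become $p^3$ and $p^4$, observe that the right-hand side of \eqref{E7} collapses to $p^2$ at $q=-1$, and for \eqref{E12} use $\sum_{k=0}^{(p-1)/2}\binom{2k}{k}4^{-k}\equiv0\pmod p$ (which the paper quotes as a special case of Sun's result, while your direct derivation via $\binom{2k}{k}4^{-k}\equiv(-1)^k\binom{(p-1)/2}{k}\pmod p$ is a perfectly good self-contained substitute). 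One small slip: at $q=-1$ one has $q^{2k^2+k}=(-1)^k$, not $1$, but this sign cancels the prefactor $(-1)^k$ in the $c_q(k)$ of Theorem \ref{Thm3}, so the limits you actually use, $(4k+1)\binom{2k}{k}^2/16^k$ and $(-1)^k(4k+1)\binom{2k}{k}^3/64^k$, are correct.
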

It is worth noting that \eqref{E12} follows from the result
\begin{align*}
\sum_{k=0}^{\frac{p-1}{2}}\frac{1}{4^k}{\dbinom{2k}{k}}\equiv0\pmod p,
\end{align*}
which is a special case of the following congruence
confirmed by Sun
\cite[Theorem 1.1]{Sun}:
\begin{align*}
\sum_{k=0}^{p^a-1}\frac{\binom{2k}{k}}{m^k}
\equiv
\left(\frac{m^2-4m}{p^a}\right)
+
\left(\frac{m^2-4m}{p^{a-1}}\right)
u_{p-\left(\frac{m^2-4m}{p}\right)}(m-2,1) \pmod{p^2},
\end{align*}
where $p$ is an odd prime, $a\in \Z^{+}$, $m$ is any integer not divisible by $p$, $u_n=u_n(A,B)$ is the Lucas sequence.
More interestingly,  
it is our congruence \eqref{E13} that  is a stronger version of Li's congruence \eqref{eq2}.

Furthermore, setting $n=p$, $q\rightarrow-1$, $d=4$ and $r=1$ in Theorems \ref{Thm1} and \ref{Thm2},
respectively, provides the following results.
\begin{corollary}
For any prime $p\equiv1\pmod4$, we have,
\begin{flalign*}
&\sum_{k=0}^{p-1}
\sum_{j=0}^{k}
(8j+1)(8k-8j+1)\frac{(\frac{1}{4})^2_j(\frac{1}{4})^2_{k-j}}{j!^2(k-j)!^2}
\equiv
p^2\bigg(\sum_{k=0}^{\frac{p-1}{4}}\frac{(\frac{1}{4})_k}{k!}\bigg)^2\pmod{p^2},\\
&\sum_{k=0}^{p-1}(-1)^k
\sum_{j=0}^{k}
(8j+1)(8k-8j+1)\frac{(\frac{1}{4})^3_j(\frac{1}{4})^3_{k-j}}{j!^3(k-j)!^3}
\equiv
p^2 \pmod{p^3}.
\end{flalign*}
\end{corollary}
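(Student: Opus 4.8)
The plan is to obtain both congruences by specializing the $q$-supercongruences of Theorems~\ref{Thm1} and~\ref{Thm2} at $d=4$, $r=1$, $n=p$ and then letting $q\to-1$. First I would check that the hypotheses hold: for a prime $p\equiv1\pmod4$ we have $\gcd(p,4)=1$, the condition $n\equiv r\pmod d$ becomes $p\equiv1\pmod4$, and the inequality $n-(n-1)d/2\le r\le n$ reduces to $2-p\le1\le p$, which is true. So both theorems are applicable with these parameters, giving a congruence modulo $\Phi_p(-q)^2\Phi_p(q)^2$ from Theorem~\ref{Thm1} and one modulo $\Phi_p(-q)^3\Phi_p(q)^2$ from Theorem~\ref{Thm2}.

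Next I would record the mechanism that turns the $q$-congruences into numerical ones. Since $p$ is prime, $\Phi_p(-q)\big|_{q=-1}=\Phi_p(1)=p$ and $\Phi_p(q)\big|_{q=-1}=\Phi_p(-1)=1$, so the two moduli specialize to $p^2$ and $p^3$. One must observe that the rational functions in $q$ appearing on both sides are regular at $q=-1$: in $c_q(k)$ the factor $(q^{2d};q^{2d})_k^{a}$ in the denominator and $(q^{2r};q^{2d})_k^{a}$ in the numerator each vanish to the same order at $q=-1$, and $(1+q^{2dk+r})/(1+q^r)$ is a polynomial; likewise on the right-hand side the apparent pole from $(q^{2d};q^{2d})_k$ is cancelled by $(q^{2r};q^{2d})_k$. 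Moreover all denominators entering the limit values involve only factorials of integers $<p$, hence are units in $\mathbb{Z}_p$; therefore the cofactor of the $q$-congruence remains a $p$-adic integer at $q=-1$ and the congruences survive the specialization.

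Then I would carry out the limits. Using $\lim_{q\to-1}\frac{(q^{2};q^{8})_k}{(q^{8};q^{8})_k}=\prod_{i=0}^{k-1}\frac{4i+1}{4i+4}=\frac{(1/4)_k}{k!}$, together with $(-1)^{4k^{2}+3k}=(-1)^{k}$ and $(-1)^{4k^{2}+6k}=1$, the coefficient $c_q(k)$ of Theorem~\ref{Thm1} tends to $(8k+1)(1/4)_k^{2}/k!^{2}$ and that of Theorem~\ref{Thm2} tends to $(-1)^{k}(8k+1)(1/4)_k^{3}/k!^{3}$; multiplying two such factors and summing reproduces the left-hand sides of the Corollary. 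On the right, $[p]_{q^{2}}^{2}/[1]_{q^{2}}^{2}\to p^{2}$ and $q^{2(p-1)(p-3)/4}\to1$ because $(p-1)(p-3)/2$ is even when $p\equiv1\pmod4$. For Theorem~\ref{Thm1} the summand $\frac{(q^{2},q;q^{8})_k}{(q^{8},q^{9};q^{8})_k}q^{6k}$ tends to $\frac{(1/4)_k}{k!}$, since the telescoping part $\frac{(q;q^{8})_k}{(q^{9};q^{8})_k}=\frac{1-q}{1-q^{8k+1}}\to1$; this gives the first congruence. For Theorem~\ref{Thm2}, a count of orders of vanishing at $q=-1$ shows the summand $\frac{(q^{2};q^{8})^{2}_k(q^{4};q^{8})_k}{(q^{8},q^{5},q^{9};q^{8})_k}q^{6k}$ has a zero of order $2k$ there (numerator order $3k$ against denominator order $k$), so only the $k=0$ term survives; the square equals $1$ and the right-hand side collapses to $p^{2}$, which is the second congruence.

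The main obstacle I anticipate is the bookkeeping in the limit $q\to-1$: many factors of the shape $1-q^{8i+2a}$ vanish simultaneously in numerator and denominator, so one must track orders of vanishing precisely — this is exactly what forces the Theorem~\ref{Thm2} right-hand side to degenerate to the constant $p^{2}$ — and one must confirm that no power of $p$ slips into a denominator when the cofactor is evaluated at $q=-1$, since otherwise the modulus would be weakened.
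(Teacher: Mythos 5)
Your proposal is correct and follows exactly the paper's route: the authors obtain this corollary precisely by setting $n=p$, $d=4$, $r=1$ in Theorems \ref{Thm1} and \ref{Thm2} and letting $q\to-1$, with the moduli $\Phi_p(-q)^2\Phi_p(q)^2$ and $\Phi_p(-q)^3\Phi_p(q)^2$ specializing to $p^2$ and $p^3$. Your limit computations (the factor $(1/4)_k/k!$, the sign bookkeeping, the telescoping quotient, and the order-$2k$ vanishing that collapses the right-hand side of Theorem \ref{Thm2} to $p^2$) supply details the paper leaves implicit, but the argument is the same.
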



The rest of this paper is organized as follows. In Sections \ref{Section3} and \ref{Section4},
the detailed derivation of  Theorems \ref{Thm1} and \ref{Thm2} will be presented, respectively.
 The proofs of Theorems \ref{Thm3} and \ref{Thm4} will be shown in Section \ref{Section5}
building on the results from the previous section.
In the last section,  our strategies of proving the supercongruences will be developed and
new $q$-supercongruences on triple sums shall be given.

\section{Proof of Theorem \ref{Thm1}}\label{Section3}
In \cite{Bachraoui}, El Bachraoui introduced an elegant lemma.
Now, we are going to show a generalization of  this lemma, which plays a crucial role  in the derivation of our theorems.
\begin{lemma}\label{le1}
Let $n>1$  be an integer, $d\geq2$ an integer and $r$ an integer with $n-\lfloor(n-1)d/2\rfloor\leq r\leq n$ and $n\equiv r\pmod d$.
Let $\{c(k)\}^\infty_{k=0}$ be a sequence of complex numbers.
If $c(k)=0$ for $(n-r)/d< k\leq n-1$, then
\begin{align}
\sum_{k=0}^{n-1}\sum_{j=0}^{k}c(j)c(k-j)=\bigg(\sum_{j=0}^{\frac{n-r}{d}}c(j)\bigg)^2.\label{eq4}
\end{align}
Furthermore, if $c(ln+k)/c(ln)=c(k)$ for all nonnegative integers $k$ and $l$ such that $0\leq k\leq n-1$, then
\begin{align}
\sum_{j=0}^{ln+k}c(j)c(ln+k-j)=\sum_{t=0}^{l}c(tn)c((l-t)n)\sum_{j=0}^{k}c(j)c(k-j).\label{eq5}
\end{align}
\end{lemma}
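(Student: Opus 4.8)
The plan is to treat both identities as elementary manipulations of (truncated) Cauchy products, with the arithmetic constraint on $r$ entering only to make certain cross terms disappear.

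For \eqref{eq4} I would first reindex by setting $i=k-j$, so that
\[
\sum_{k=0}^{n-1}\sum_{j=0}^{k}c(j)c(k-j)=\sum_{\substack{i,j\geq 0\\ i+j\leq n-1}}c(i)c(j).
\]
On this range one automatically has $i,j\leq n-1$, so the hypothesis $c(k)=0$ for $(n-r)/d<k\leq n-1$ leaves only the terms with $i\leq N$ and $j\leq N$, where $N:=(n-r)/d$ is a nonnegative integer since $n\equiv r\pmod d$ and $r\leq n$. Now the prescribed lower bound $r\geq n-\lfloor(n-1)d/2\rfloor$ is exactly the inequality $2N\leq n-1$; hence for $0\leq i,j\leq N$ the constraint $i+j\leq n-1$ holds automatically, and the sum collapses to
\[
\sum_{i=0}^{N}\sum_{j=0}^{N}c(i)c(j)=\Bigl(\sum_{j=0}^{N}c(j)\Bigr)^{2},
\]
which is \eqref{eq4}.

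For \eqref{eq5} I would perform a Euclidean-type splitting of the summation index, keeping the hypotheses of the first part in force. Fix $0\leq k\leq n-1$ and write each $j\in\{0,1,\dots,ln+k\}$ uniquely as $j=an+b$ with $0\leq b\leq n-1$ (so $0\leq a\leq l$, and $b\leq k$ when $a=l$); then $ln+k-j=(l-a)n+(k-b)$. I split according to $0\leq b\leq k$ versus $k<b\leq n-1$. In the first case $0\leq k-b\leq n-1$ and $0\leq l-a\leq l$, so the multiplicativity hypothesis $c(an+b)=c(an)c(b)$ applies to both $c(j)$ and $c(ln+k-j)$, and this part of the sum equals
\[
\sum_{a=0}^{l}c(an)c((l-a)n)\sum_{b=0}^{k}c(b)c(k-b),
\]
which is precisely the right-hand side of \eqref{eq5}. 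In the second case one rewrites $ln+k-j=(l-a-1)n+(n+k-b)$ with $0\leq n+k-b\leq n-1$, and the requirement $j\leq ln+k$ forces $a\leq l-1$; applying multiplicativity again, this part of the sum equals the cross term
\[
\Bigl(\sum_{a=0}^{l-1}c(an)c((l-1-a)n)\Bigr)\Bigl(\sum_{b=k+1}^{n-1}c(b)c(n+k-b)\Bigr).
\]

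It then remains to show this cross term vanishes, and here the vanishing hypothesis from the first part re-enters: a summand $c(b)c(n+k-b)$ with $k+1\leq b\leq n-1$ is nonzero only if both $b\leq N$ and $n+k-b\leq N$, and adding these inequalities gives $n+k\leq 2N\leq n-1$, i.e.\ $k\leq-1$, which is impossible. Hence only the main term survives and \eqref{eq5} follows. The only moderately delicate point is the bookkeeping of the index ranges in the second identity; the single substantive observation, used in both parts, is that the prescribed lower bound on $r$ amounts exactly to $2(n-r)/d\leq n-1$, which is what collapses the constrained double sum into a square and annihilates the cross term. I do not anticipate any genuine obstacle beyond carrying out these routine manipulations carefully.
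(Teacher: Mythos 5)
Your proof is correct and follows essentially the same route as the paper: rewrite the double sum over residues modulo $n$, use the multiplicativity $c(an+b)=c(an)c(b)$ to factor each block, and kill the cross term $\sum_{b=k+1}^{n-1}c(b)c(n+k-b)$ via the vanishing hypothesis together with the observation that the lower bound on $r$ is exactly $2(n-r)/d\leq n-1$. Your way of dispatching the cross term (adding the two inequalities $b\leq(n-r)/d$ and $n+k-b\leq(n-r)/d$ to get a contradiction) is a slightly cleaner packaging of the paper's case analysis, but it is the same argument in substance.
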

\begin{proof}
 \eqref{eq4} is immediate from the assumption.
Now we are going to confirm \eqref{eq5} is true.
\begin{flalign*}
&\sum_{j=0}^{ln+k}c(j)c(ln+k-j)\\
&=\sum_{j=0}^{k}c(j)c(ln+k-j)+\sum_{t=0}^{l-1}\sum_{j=tn+k+1}^{(t+1)n+k}c(j)c(ln+k-j)\\
&=c(ln)\sum_{j=0}^{k}c(j)c(k-j)+\sum_{t=0}^{l-1}\sum_{j=tn+k+1}^{(t+1)n-1}c(j)c(ln+k-j)+\sum_{t=0}^{l-1}\sum_{j=(t+1)n}^{(t+1)n+k}c(j)c(ln+k-j)\\
&=\sum_{t=0}^{l}c(tn)c((l-t)n)\sum_{r_j=0}^{k}c(r_j)c(k-r_j)+\sum_{t=0}^{l-1}c(tn)c((l-t-1)n)\sum_{r_j=k+1}^{n-1}c(r_j)c(n+k-r_j),
\end{flalign*}
where  $r_j$ is the remainder of the division of $j$ by $n$.

We now claim
\begin{equation}\label{EEEEEE1}
\sum_{t=0}^{l-1}c(tn)c((l-t-1)n)\sum_{r_j=k+1}^{n-1}c(r_j)c(n+k-r_j)=0.
\end{equation}
In fact, when $r_j>k\geq (n-r)/d$ or $r_j\geq (n-r)/d>k$, there holds
\begin{align*}
c(r_j)=0.
\end{align*}
When $(n-r)/d>r_j>k$, there holds
\begin{align*}
c(n+k-r_j)=0.
\end{align*}
As a result, \eqref{EEEEEE1} is correct. Then, we immediately obtain \eqref{eq5}.
\end{proof}

In order to prove Theorem \ref{Thm1}, we start by recalling a result from \cite[Eq. (3.5)]{Wang_Xu}.
\begin{lemma}[Wang and Xu\cite{Wang_Xu}]\label{Lemma2}
Let $n>1$ be an odd integer, $d$ a positive integer, $a$ an indeterminate, $r$ an integer with $r<n$ and $n\equiv r\pmod d$.
Then, modulo $\Phi_n(-q)(1-aq^{2n})(a-q^{2n})$,
\begin{flalign}
&\sum_{k=0}^{\frac{n-r}{d}}(-1)^k\frac{(1+q^{2dk+r})(aq^{2r},q^{2r}/a;q^{2d})_k}
{(1+q^r)(aq^{2d},q^{2d}/a;q^{2d})_k}q^{dk^2+k(d-r)}\nonumber\\
&\equiv(-1)^{\frac{n-r}{d}}\frac{[n]_{q^2}}{[r]_{q^2}}q^{\frac{(n-r)(n+r-d)}{d}}
\sum_{k=0}^{\frac{n-r}{d}}\frac{(aq^{2r},q^{2r}/a,q^r;q^{2d})_k}
{(q^{2d},q^{2dk+r},q^{2r};q^{2d})_k}q^{2k(d-r)}.\label{E1}
\end{flalign}
\end{lemma}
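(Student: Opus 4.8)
The plan is to establish \eqref{E1} by `creative microscoping' together with the Chinese remainder theorem, as in \cite{Guo_Zudilin,Wang_Xu}. Regard $a$ as an indeterminate. The three factors $\Phi_n(-q)$, $1-aq^{2n}$ and $a-q^{2n}$ of the modulus are pairwise coprime over $\C(a)$: a common root of the last two would force $a^2=1$, and a common root of $\Phi_n(-q)$ with either of the others would force $a=\pm1$. Hence, by CRT, it is enough to prove \eqref{E1} modulo each of the three factors separately. Moreover, the $q$-shifted factorials in \eqref{E1} occur only through the pairs $(aq^{2r},q^{2r}/a;q^{2d})_k$ and $(aq^{2d},q^{2d}/a;q^{2d})_k$, each invariant under $a\mapsto a^{-1}$, so both sides of \eqref{E1} are invariant under $a\mapsto a^{-1}$; consequently the congruence modulo $a-q^{2n}$ follows from the one modulo $1-aq^{2n}$ by the substitution $a\mapsto a^{-1}$, leaving only two cases.

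For the case modulo $1-aq^{2n}$, I would specialise $a=q^{-2n}$. Since $n\equiv r\pmod d$, the numerator factor $(aq^{2r};q^{2d})_k=(q^{-2(n-r)};q^{2d})_k$ then vanishes for every $k>(n-r)/d$, so the left-hand side of \eqref{E1} becomes a \emph{terminating} well-poised basic hypergeometric series in the base $q^{2d}$. To this series I would apply the Bailey transformation \cite[Eq. (2.9)]{Andrews} — the very tool used in \cite{Wang_Xu} to derive \eqref{EEE8} and \eqref{EEE7} — with the parameters chosen so that its right-hand side is precisely the truncated $q$-series on the right of \eqref{E1} at $a=q^{-2n}$, the remaining elementary factors assembling into $(-1)^{(n-r)/d}[n]_{q^2}/[r]_{q^2}$ together with the power $q^{(n-r)(n+r-d)/d}$. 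By the symmetry noted above, the case modulo $a-q^{2n}$ is then automatic.

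For the case modulo $\Phi_n(-q)$, observe that for odd $n$ one has $\Phi_n(-q)=\Phi_{2n}(q)$, which divides $q^{2n}-1$ but not $q^2-1$ (as $2n>2$), hence divides $[n]_{q^2}=(q^{2n}-1)/(q^2-1)$; therefore the right-hand side of \eqref{E1} is $\equiv0\pmod{\Phi_n(-q)}$, the denominators $[r]_{q^2}$ and $(q^{2dk+r};q^{2d})_k$ being units modulo $\Phi_n(-q)$ under the hypotheses on $n,d,r$. It thus suffices to show that the \emph{left-hand} side of \eqref{E1} is itself divisible by $\Phi_n(-q)$. This is the genuinely arithmetic step, and I would carry it out as in \cite{Wang_Xu,Guo6}: embed the truncated well-poised sum into a longer terminating well-poised sum governed by the summation/transformation machinery of \cite[Eq. (2.9)]{Andrews}, whose closed form carries a factor divisible by $\Phi_n(-q)$ when $q$ is a primitive $2n$-th root of unity. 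Collecting the three cases and invoking CRT then yields \eqref{E1} modulo $\Phi_n(-q)(1-aq^{2n})(a-q^{2n})$.

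I expect the principal obstacle to be twofold: first, choosing the correct specialisation of the parameters in \cite[Eq. (2.9)]{Andrews} so that the quadratic power $q^{dk^2+k(d-r)}$ on the left of \eqref{E1} is matched against the linear power $q^{2k(d-r)}$ on the right while the overall factor emerges as $q^{(n-r)(n+r-d)/d}$; and second, the divisibility by $\Phi_n(-q)$ of the left-hand side in the last case, which is where the `supercongruence' content genuinely sits. Verifying that $[r]_{q^2}$ and $(q^{2dk+r};q^{2d})_k$ are invertible modulo each of the three factors — a consequence of $r<n$, $\gcd(n,d)=1$ and $n-\lfloor(n-1)d/2\rfloor\le r\le n$ — is routine but should be recorded.
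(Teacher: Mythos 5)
You should first note that the paper itself contains no proof of this statement: Lemma \ref{Lemma2} is simply recalled from \cite[Eq.\ (3.5)]{Wang_Xu}, so there is no in-paper argument to compare against. Measured on its own terms, your overall strategy --- splitting the modulus by coprimality, using the $a\mapsto a^{-1}$ symmetry to reduce the factor $a-q^{2n}$ to $1-aq^{2n}$, proving an exact identity at $a=q^{\pm 2n}$ via a terminating transformation, and treating $\Phi_n(-q)$ separately --- is the standard creative-microscoping template and is certainly the route suggested by the source, which the present paper describes as resting on the Bailey transformation \cite[Eq.\ (2.9)]{Andrews}.

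However, as a proof the proposal has two genuine gaps, located exactly at the two points you flag as ``obstacles''. (a) For the congruence modulo $(1-aq^{2n})(a-q^{2n})$ you must verify an exact identity at $a=q^{-2n}$ (equivalently $a=q^{2n}$); you only assert that some specialisation of \cite[Eq.\ (2.9)]{Andrews} ``is precisely'' the right-hand side of \eqref{E1}, without exhibiting the parameter choice, matching the quadratic power $q^{dk^2+k(d-r)}$, or checking that the prefactor assembles into $(-1)^{(n-r)/d}[n]_{q^2}q^{(n-r)(n+r-d)/d}/[r]_{q^2}$; this computation is the entire content of the lemma and cannot be left implicit. (b) For the congruence modulo $\Phi_n(-q)$, your reduction is sound as far as it goes ($\Phi_n(-q)=\Phi_{2n}(q)$ divides $[n]_{q^2}$ for odd $n>1$, and the denominators are units there, so it suffices to show the left-hand side vanishes), but your plan for that vanishing --- embedding the sum in a longer terminating well-poised series ``whose closed form carries the factor'' --- is not an argument: for generic $a$ no closed form is available, which is precisely why the microscoping method introduces $a$ in the first place. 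The standard device, and the one this very paper uses for the analogous step in the proof of Theorem \ref{Thm3}, is the term pairing $c_q\bigl((n-r)/d-k,a\bigr)\equiv -c_q(k,a)\pmod{\Phi_n(-q)}$ obtained from the Guo--Schlosser congruence (Lemma \ref{Lemma5}), which kills the truncated sum modulo $\Phi_n(-q)$ for all $a$. (Two small slips: a common root of $\Phi_n(-q)$ with $1-aq^{2n}$ or $a-q^{2n}$ forces $a=1$, not $a=\pm1$; and the hypotheses $\gcd(n,d)=1$ and the lower bound on $r$ that you invoke belong to Theorem \ref{Thm1}, not to this lemma.) Until (a) is carried out explicitly and (b) is replaced by such a pairing argument, what you have is an outline of the correct strategy rather than a proof.
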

\begin{proof}[Proof of Theorem \ref{Thm1}]
Define the $k$-th term of left-hand side of congruence \eqref{E1} by $c_q(k,a)$, i.e.,
\begin{equation*}
c_q(k,a)=(-1)^k\frac{(1+q^{2dk+r})(aq^{2r},q^{2r}/a;q^{2d})_k}
{(1+q^r)(aq^{2d},q^{2d}/a;q^{2d})_k}q^{dk^2+k(d-r)}.
\end{equation*}
Then, putting $a=q^{2n}$ in \eqref{E1} gives
\begin{equation*}
\sum_{k=0}^{\frac{n-r}{d}}c_q(k,q^{2n})=(-1)^{\frac{n-r}{d}}\frac{[n]_{q^2}}{[r]_{q^2}}q^{\frac{(n-r)(n+r-d)}{d}}
\sum_{k=0}^{\frac{n-r}{d}}\frac{(q^{2r+2n},q^{2r-2n},q^r;q^{2d})_k}
{(q^{2d},q^{2r},q^{2d+r};q^{2d})_k}q^{2k(d-r)}.
\end{equation*}
Note the fact that $c_q(k,q^{2n})=0$ for $k$ in the range $(n-r)/d<k\leq n-1$.
Then, we obtain the following result via Lemma \ref{le1},
\begin{flalign*}
&\sum_{k=0}^{n-1}\sum_{j=0}^{k}c_q(j,q^{2n})c_q(k-j,q^{2n})\\
&=
\frac{[n]^2_{q^2}}{[r]^2_{q^2}}q^{\frac{2(n-r)(n+r-d)}{d}}
\bigg(\sum_{k=0}^{\frac{n-r}{d}}\frac{(q^{2r+2n},q^{2r-2n};q^{2d})_k(q^r;q^{2d})_k}{(q^{2d},q^{2r},q^{2d+r};q^{2d})_k}q^{2k(d-r)}\bigg)^2.
\end{flalign*}
Similarly, we can prove
 \begin{flalign*}
&\sum_{k=0}^{n-1}\sum_{j=0}^{k}c_q(j,q^{-2n})z_q(k-j,q^{-2n})\\
&=
\frac{[n]^2_{q^2}}{[r]^2_{q^2}}q^{\frac{2(n-r)(n+r-d)}{d}}
\bigg(\sum_{k=0}^{\frac{n-r}{d}}\frac{(q^{2r+2n},q^{2r-2n};q^{2d})_k(q^r;q^{2d})_k}{(q^{2d},q^{2r},q^{2d+r};q^{2d})_k}q^{2k(d-r)}\bigg)^2.
\end{flalign*}
Thus,
 we have the following $q$-congruence for the variable $a$: modulo $(1-aq^{2n})(a-q^{2n})$,
\begin{flalign}
\sum_{k=0}^{n-1}\sum_{j=0}^{k}c_q(j,a)c_q(k-j,a)
\equiv
\frac{[n]^2_{q^2}}{[r]^2_{q^2}}q^{\frac{2(n-r)(n+r-d)}{d}}
\bigg(\sum_{k=0}^{\frac{n-r}{d}}\frac{(aq^{2r},q^{2r}/a;q^{2d})_k(q^r;q^{2d})_k}
{(q^{2d},q^{2r},q^{2d+r};q^{2d})_k}q^{2k(d-r)}\bigg)^2.\label{eq10}
\end{flalign}
The proof then follows from the congruence \eqref{eq10} with $a\rightarrow 1$.
\end{proof}

\section{Proof of Theorem \ref{Thm2}}\label{Section4}
Before verifying Theorem \ref{Thm2}, we first list a result proved in\cite[Lemma 2]{Wang_Xu}.
\begin{lemma}[Wang and Xu\cite{Wang_Xu}]\label{Lemma3}
Let $n>1$ be an odd integer, $d$ a positive integer, $a$ an indeterminate, $r$ an integer with $r<n$ and $n\equiv r\pmod d$.
Then, modulo $\Phi_n(-q)(1-aq^{2n})(a-q^{2n})$,
\begin{flalign}
&\sum_{k=0}^{\frac{n-r}{d}}(-1)^k\frac{(1+q^{2dk+r})(aq^{2r},q^{2r}/a,q^{2r};q^{2d})_k}
{(1+q^r)(aq^{2d},q^{2d}/a,q^{2d};q^{2d})_k}q^{dk^2+2k(d-r)}\nonumber\\
&\equiv(-1)^{\frac{n-r}{d}}\frac{[n]_{q^2}}{[r]_{q^2}}q^{\frac{(n-r)(n+r-d)}{d}}
\sum_{k=0}^{\frac{n-r}{d}}\frac{(aq^{2r},q^{2r}/a,q^d;q^{2d})_k}
{(q^{2d},q^{d+r},q^{2d+r};q^{2d})_k}q^{2k(d-r)}.\label{E4}
\end{flalign}
\end{lemma}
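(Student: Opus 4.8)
The natural route is the creative microscoping method \cite{Guo_Zudilin}: keep $a$ as a free parameter and split the modulus. The polynomial $\Phi_n(-q)$ involves only $q$, whereas $(1-aq^{2n})(a-q^{2n})$ does not vanish at the zeros of $\Phi_n(-q)$, so the two are coprime; by the Chinese remainder theorem it therefore suffices to prove \eqref{E4} modulo $(1-aq^{2n})(a-q^{2n})$ and modulo $\Phi_n(-q)$ separately.

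For the congruence modulo $(1-aq^{2n})(a-q^{2n})$, specialize $a=q^{2n}$. Then the entry $q^{2r}/a=q^{2r-2n}$ takes the form $q^{-2d\cdot(n-r)/d}$, so the left-hand side of \eqref{E4}, with its natural upper limit $(n-r)/d$, is a \emph{terminating}, well-poised basic hypergeometric series in base $q^{2d}$ whose numerator data is built from $aq^{2r}$, $q^{2r}/a$ and $q^{2r}$. The key point is that this is precisely the series to which Andrews' Bailey transformation \cite[Eq.~(2.9)]{Andrews} applies, and that the transformed series on the other side equals exactly the $a=q^{2n}$ instance of the right-hand side of \eqref{E4}; this makes \eqref{E4} a genuine identity at $a=q^{2n}$. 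Both sides of \eqref{E4} are invariant under $a\mapsto 1/a$ (the pairs $(aq^{2r},q^{2r}/a)$ and $(aq^{2d},q^{2d}/a)$ are each symmetric), so the same identity holds at $a=q^{-2n}$, whence \eqref{E4} holds modulo $(1-aq^{2n})(a-q^{2n})$.

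For the congruence modulo $\Phi_n(-q)$, let $\zeta$ be a zero of $\Phi_n(-q)$; since $n$ is odd, $\zeta$ is a primitive $2n$-th root of unity, so $\zeta^{n}=-1$ and $\zeta^{2n}=1$. On the right-hand side of \eqref{E4}, $[n]_{q^2}$ is divisible by $\Phi_n(q^2)=\Phi_n(q)\Phi_n(-q)$, while $[r]_{q^2}$ and the $q$-shifted factorials occurring there remain nonzero at $q=\zeta$ (here $\gcd(n,d)=1$ and $r<n$ are used); hence the right-hand side vanishes at $q=\zeta$. For the left-hand side, put $M=(n-r)/d$, write its $k$-th summand as $c_q(k,a)$, and pair it with the $(M-k)$-th summand: using the reflection formula $(\alpha;q^{2d})_{M-k}=(\alpha;q^{2d})_M/(\alpha q^{2d(M-k)};q^{2d})_k$ together with $\zeta^{2n}=1$ and $\zeta^{n}=-1$, one checks that $c_q(M-k,a)=-c_q(k,a)$ at $q=\zeta$, identically in $a$, so the terms cancel in pairs and the left-hand side too vanishes at $q=\zeta$. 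Assembling the two cases via the Chinese remainder theorem proves \eqref{E4}.

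The step requiring the most care is the application of Andrews' transformation: one has to pin down the exact specialization of \cite[Eq.~(2.9)]{Andrews} that simultaneously matches the base $q^{2d}$, the three numerator pairs, and the quadratic power $q^{dk^2+2k(d-r)}$, and to verify that the terminating hypothesis is met so that the transformation is legitimately applicable. By comparison, the reflection computation modulo $\Phi_n(-q)$ is mechanical bookkeeping with $q$-shifted factorials, and the final assembly is immediate.
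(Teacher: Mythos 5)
Your overall scaffolding is the right one, and it matches the actual provenance of this statement: the paper itself gives no proof of Lemma \ref{Lemma3} but imports it from Wang and Xu \cite{Wang_Xu}, whose method is precisely creative microscoping combined with Andrews' Bailey transformation \cite[Eq.~(2.9)]{Andrews}. Your reduction modulo $\Phi_n(-q)$ is sound: writing $M=(n-r)/d$ and applying Lemma \ref{Lemma5} (i.e.\ \eqref{EE7}) with $q\mapsto q^2$ to the three parameter pairs $a$, $1/a$ and $1$ (recall $\Phi_n(q^2)=\Phi_n(q)\Phi_n(-q)$), the powers of $a$ cancel, and together with $\zeta^{2n}=1$, $\zeta^{n}=-1$ the exponent bookkeeping does give $c_q(M-k,a)\equiv -c_q(k,a) \pmod{\Phi_n(-q)}$, identically in $a$; combined with the visible factor $[n]_{q^2}$ on the right-hand side, both sides vanish modulo $\Phi_n(-q)$. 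The $a\mapsto 1/a$ symmetry shortcut for passing from $a=q^{2n}$ to $a=q^{-2n}$ is also fine.

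The genuine gap is the step you yourself flag and then skip: the exact equality of the two sides of \eqref{E4} at $a=q^{2n}$. That equality is the entire substance of the lemma, and ``this is precisely the series to which Andrews' transformation applies'' is an assertion, not a derivation. You do not exhibit the specialization of \cite[Eq.~(2.9)]{Andrews} in base $q^{2d}$, check that its termination and parameter constraints are met when $q^{2r}/a=q^{-2dM}$, or show how the prefactor $(-1)^{(n-r)/d}\frac{[n]_{q^2}}{[r]_{q^2}}q^{(n-r)(n+r-d)/d}$ and the shifted denominator parameters $q^{d+r},q^{2d+r}$ on the right of \eqref{E4} emerge from the transformed side; note also that the summand of \eqref{E4} is only ``half'' very-well-poised (it contains $1+q^{2dk+r}$ rather than $1-q^{2r+4dk}$), so the matching is not a routine pattern-match. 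Until that computation is carried out, what you have is a correct plan whose pivotal identity is exactly the content supplied by \cite[Lemma 2]{Wang_Xu}. A minor further point: you invoke $\gcd(n,d)=1$ to guarantee that the denominators are prime to $\Phi_n(-q)$, but this is not among the stated hypotheses of the lemma (it is only assumed where the lemma is applied), so the non-vanishing of $[r]_{q^2}$ and of the $q$-shifted factorials at roots of $\Phi_n(-q)$ deserves an explicit justification under the hypotheses you actually use.
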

\begin{proof}[Proof of Theorem \ref{Thm2}]
Let $c_q(k,a)$ denote the $k$-th term of left-hand side of congruence \eqref{E4}, i.e.,
\begin{equation*}
c_q(k,a)=(-1)^k\frac{(1+q^{2dk+r})(aq^{2r},q^{2r}/a;q^{2d})_k(q^{2r};q^{2d})_k}
{(1+q^r)(aq^{2d},q^{2d}/a;q^{2d})_k(q^{2d};q^{2d})_k}q^{dk^2+2k(d-r)}.
\end{equation*}
Since $n\equiv r\pmod{d}$, we have $(q^{2r};q^{2d})_k\equiv0\pmod{\Phi_n(q^2)}$ for $(n-r)/d<k\leq n-1$.
Then, according to Lemma \ref{le1}, we derive
\begin{flalign}
\sum_{k=0}^{n-1}\sum_{j=0}^{k}c_q(j,a)c_q(k-j,a)
\equiv
\bigg(\sum_{j=0}^{\frac{n-r}{d}}c_q(j,a)\bigg)^2
\equiv 0\pmod{\Phi_n(-q)}.\label{E5}
\end{flalign}
Note that $\sum_{j=0}^{\frac{n-r}{d}}c_q(j,a)\equiv0\pmod{\Phi_n(-q)}$
 has been proved in \cite{Wang_Xu}.

Next, along the same lines of the proof of Theorem \ref{Thm1}, we can obtain
the following $q$-congruence for the variable $a$: modulo $(1-aq^{2n})(a-q^{2n})$,
\begin{flalign}
\sum_{k=0}^{n-1}\sum_{j=0}^{k}c_q(j,a)c_q(k-j,a)
\equiv
\frac{[n]^2_{q^2}}{[r]^2_{q^2}}q^{\frac{2(n-r)(n+r-d)}{d}}
\bigg(\sum_{k=0}^{\frac{n-r}{d}}\frac{(aq^{2r},q^{2r}/a,q^d;q^{2d})_k}
{(q^{2d},q^{d+r},q^{2d+r};q^{2d})_k}q^{2k(d-r)}\bigg)^2.\label{E6}
\end{flalign}
It is easy to see that the right-hand side of \eqref{E6} is also congruent to $0$ modulo $\Phi_n(-q)$.
As a result, the congruence \eqref{E6} holds  modulo $\Phi_n(-q)$.
Since $1-aq^{2n}$, $a-q^{2n}$ and $\Phi_n(-q)$ are pairwise relatively prime polynomials,
we conclude \eqref{E6} is true modulo $\Phi_n(-q)(1-aq^{2n})(a-q^{2n})$.
Then, letting $a\rightarrow 1$ is led to the desired result.
\end{proof}

\section{ Proofs of Theorems \ref{Thm3} and \ref{Thm4}}\label{Section5}
The proofs of  Theorems \ref{Thm3} and \ref{Thm4} need following two auxiliary results.
Lemma \ref{Lemma5} is  given by Guo and Schlosser \cite[Lemma 2.1]{Guo_Schlosser1}
and has been widely used.
Lemma \ref{Lemma4} is recently established by Guo and Li \cite[Lemma 3.3]{Guo_Li}.
\begin{lemma}[Guo and Schlosser\cite{Guo_Schlosser1}]\label{Lemma5}
Let $n$, $d$ be positive integers, $r$ an integer satisfying $r<n$ and $n\equiv r\pmod d$. Then, for $0\le k\le (n-r)/d$,
we have
\begin{equation}\label{EE7}
\frac{(aq^r;q^d)_{(n-r)/d-k}}{(q^d/a;q^d)_{(n-r)/d-k}}
\equiv (-a)^{\frac{n-r}{d}-2k}\frac{(aq^r;q^d)_k}{(q^d/a;q^d)_k}q^{\frac{(n-r)(n-d+r)}{2d}+k(d-r)}\pmod{\Phi_n(q)}.
\end{equation}
\end{lemma}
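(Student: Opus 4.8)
The plan is to deduce everything from the single congruence $q^n\equiv1\pmod{\Phi_n(q)}$, which is immediate from $\Phi_n(q)\mid q^n-1$, together with one reflection identity for $q$-shifted factorials. Throughout, put $m=(n-r)/d$; by the hypotheses this is a positive integer with $n=r+md$, and the stated range $0\le k\le(n-r)/d$ reads simply $0\le k\le m$. The first step is to establish that, for all $0\le j\le m$,
\begin{equation*}
(q^d/a;q^d)_j\equiv(-1/a)^{j}\,q^{dj(j+1)/2}\,\frac{(aq^r;q^d)_m}{(aq^r;q^d)_{m-j}}\pmod{\Phi_n(q)}.
\end{equation*}
To prove this, write $(q^d/a;q^d)_j=\prod_{l=1}^{j}(1-q^{ld}/a)$ and rewrite each factor as $1-q^{ld}/a=-\frac{q^{ld}}{a}\bigl(1-aq^{-ld}\bigr)$; since $q^{-ld}\equiv q^{\,n-ld}=q^{\,r+(m-l)d}\pmod{\Phi_n(q)}$, this factor is $\equiv-\frac{q^{ld}}{a}\bigl(1-aq^{\,r+(m-l)d}\bigr)$. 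Multiplying over $l=1,\dots,j$, the monomial parts contribute $(-1/a)^{j}q^{d(1+2+\cdots+j)}=(-1/a)^{j}q^{dj(j+1)/2}$, while the substitution $i=m-l$ turns the remaining product into $\prod_{i=m-j}^{m-1}(1-aq^{\,r+id})=(aq^r;q^d)_m/(aq^r;q^d)_{m-j}$, as claimed.

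Next I would apply this identity with the two parameter values $j=k$ and $j=m-k$. Solving the $j=k$ instance for $(aq^r;q^d)_{m-k}$ gives
\begin{equation*}
(aq^r;q^d)_{m-k}\equiv(-1/a)^{k}\,q^{dk(k+1)/2}\,\frac{(aq^r;q^d)_m}{(q^d/a;q^d)_k}\pmod{\Phi_n(q)},
\end{equation*}
and the $j=m-k$ instance reads
\begin{equation*}
(q^d/a;q^d)_{m-k}\equiv(-1/a)^{m-k}\,q^{d(m-k)(m-k+1)/2}\,\frac{(aq^r;q^d)_m}{(aq^r;q^d)_k}\pmod{\Phi_n(q)}.
\end{equation*}
Dividing the first congruence by the second, the common factor $(aq^r;q^d)_m$ cancels and one is left with
\begin{equation*}
\frac{(aq^r;q^d)_{m-k}}{(q^d/a;q^d)_{m-k}}\equiv(-1/a)^{2k-m}\,q^{\frac{d}{2}\bigl(k(k+1)-(m-k)(m-k+1)\bigr)}\,\frac{(aq^r;q^d)_k}{(q^d/a;q^d)_k}\pmod{\Phi_n(q)},
\end{equation*}
and $(-1/a)^{2k-m}=(-a)^{m-2k}$, which is exactly the prefactor in \eqref{EE7}. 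Since these are congruences of rational expressions in $a$, a fully rigorous version clears all denominators and reads each display as a congruence of polynomials in $\mathbb Z[q,a]$; the Pochhammer symbols being cancelled are not zero divisors modulo $\Phi_n(q)$, so no difficulty arises here.

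It remains to match the exponent of $q$. As $q^n\equiv1\pmod{\Phi_n(q)}$, it suffices to verify that $\frac{d}{2}\bigl(k(k+1)-(m-k)(m-k+1)\bigr)$ is congruent modulo $n$ to the claimed exponent $\frac{(n-r)(n-d+r)}{2d}+k(d-r)$. Using $n-r=md$ and $n-d+r=(m-1)d+2r$, the claimed exponent equals $\frac{dm(m-1)}{2}+mr+k(d-r)$, whereas a short computation gives $\frac{d}{2}\bigl(k(k+1)-(m-k)(m-k+1)\bigr)=\frac{d(m+1)(2k-m)}{2}$; subtracting the two, the difference collapses to $(k-m)(dm+r)=(k-m)n$, a multiple of $n$. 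This yields \eqref{EE7}. I expect this final bookkeeping — seeing that the two exponents differ by precisely $(k-m)n$, so that the two powers of $q$ agree modulo $\Phi_n(q)$ — to be the only point requiring any care; the rest is a direct manipulation of finite products.
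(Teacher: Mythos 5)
Your proof is correct: the reflection identity obtained from $q^{-ld}\equiv q^{n-ld}=q^{r+(m-l)d}\pmod{\Phi_n(q)}$, the cancellation of $(aq^r;q^d)_m$ after taking $j=k$ and $j=m-k$, and the final exponent bookkeeping (the two exponents differ by exactly $(m-k)n$) all check out, and the denominators involved are indeed units modulo $\Phi_n(q)$ since $a$ is an indeterminate. Note that the paper itself gives no proof of this lemma --- it is quoted from Guo and Schlosser \cite{Guo_Schlosser1} --- and your argument is essentially the standard one used there, resting solely on $q^n\equiv 1\pmod{\Phi_n(q)}$.
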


\begin{lemma}[Guo and Li \cite{Guo_Li}]\label{Lemma4}
Let $n$ be a positive odd integer. Let $a_0$, $a_1$,$\cdots$, $a_{n-1}$ be a sequence of numbers satisfying
$a_k=-a_{(n-1)/2-k}$ for $0\leq k\leq (n-1)/2$ and $a_k=-a_{(3n-1)/2-k}$ for $(n+1)/2\leq k\leq n-1$.
Then
\begin{equation*}
\sum_{k=0}^{n-1}\sum_{j=0}^{k}a_ja_{k-j}=0.
\end{equation*}
\end{lemma}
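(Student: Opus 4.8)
The plan is to repackage the sequence as the polynomial $A(x)=\sum_{k=0}^{n-1}a_kx^{k}$ and to observe that, since every index appearing in $\sum_{j=0}^{k}a_ja_{k-j}$ with $0\le k\le n-1$ lies in $\{0,1,\dots,n-1\}$, the quantity $S:=\sum_{k=0}^{n-1}\sum_{j=0}^{k}a_ja_{k-j}$ is exactly the sum of the coefficients of $x^{0},x^{1},\dots,x^{n-1}$ in $A(x)^{2}$. Put $m=(n-1)/2$ and split $A(x)=P(x)+Q(x)$, where $P(x)=\sum_{k=0}^{m}a_kx^{k}$ and $Q(x)=\sum_{k=m+1}^{n-1}a_kx^{k}$. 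Then $A(x)^{2}=P(x)^{2}+2P(x)Q(x)+Q(x)^{2}$, and I would compute the contribution of each of these three summands to $S$ in turn. (When $n=1$ the statement is trivial, since then $a_{0}=-a_{0}=0$, so one may assume $n\ge 3$.)

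The first step is to turn the two antisymmetry hypotheses into functional equations. Re-indexing $k\mapsto m-k$ in $P$ and using $a_k=-a_{m-k}$ for $0\le k\le m$ gives $P(x)=-x^{m}P(1/x)$; in particular $P(1)=0$, and hence $P(x)^{2}=x^{2m}P(1/x)^{2}=x^{n-1}P(1/x)^{2}$. Likewise, noting that $(3n-1)/2=n+m$, re-indexing $k\mapsto (n+m)-k$ in $Q$ and using $a_k=-a_{(3n-1)/2-k}$ for $m+1\le k\le n-1$ gives $Q(x)=-x^{n+m}Q(1/x)$, hence $Q(x)^{2}=x^{3n-1}Q(1/x)^{2}$ and $P(x)Q(x)=x^{2n-1}P(1/x)Q(1/x)$, where I have used $n+2m=2n-1$.

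Now I would read off the three contributions. Since $\deg P\le m$, the polynomial $P(x)^{2}$ has degree at most $2m=n-1$, so its contribution to $S$ is the sum of \emph{all} of its coefficients, namely $P(1)^{2}=0$. Since the lowest-degree term of $Q(x)$ has degree at least $m+1$, the polynomial $Q(x)^{2}$ is divisible by $x^{2(m+1)}=x^{n+1}$ and so contributes nothing in degrees $\le n-1$. For the cross term, write $P(x)Q(x)=\sum_{k}r_kx^{k}$; its support lies in $\{m+1,\dots,m+n-1\}$, and the functional equation forces $r_k=r_{2n-1-k}$. Because $2n-1$ is odd, the reflection $k\mapsto 2n-1-k$ has no integer fixed point and maps $\{m+1,\dots,n-1\}$ bijectively onto $\{n,\dots,m+n-1\}$; these two sets partition the support of $P(x)Q(x)$, so $\sum_{k\le n-1}r_k=\sum_{k\ge n}r_k$ while their sum is $P(1)Q(1)=0$, whence both vanish. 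Thus $2P(x)Q(x)$ contributes $0$ to $S$ as well, and $S=0$.

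The argument is short, and the only steps that call for attention are the bookkeeping of exponents in the three functional equations (using $2m=n-1$ repeatedly) and the claim that the reflection $k\mapsto 2n-1-k$ splits the support of $P(x)Q(x)$ exactly into the degrees $\le n-1$ and the degrees $\ge n$ — this is the one place where the hypothesis that $n$ is odd is genuinely used, and I expect this verification to be the main (if minor) obstacle. It is worth noting that the various parity-forced vanishings implicit in the hypotheses — such as $a_{m/2}=0$ when $m$ is even, or a forced zero among $a_{m+1},\dots,a_{n-1}$ when $n\equiv 3\pmod 4$ — require no separate treatment, since they are absorbed automatically into the functional-equation computation.
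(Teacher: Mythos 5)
Your proof is correct. Note first that the paper itself does not prove this lemma at all: it is quoted verbatim from Guo and Li (their Lemma 3.3) and used as a black box, so there is no in-paper argument to compare against; your write-up supplies a complete, self-contained proof. The generating-function packaging works exactly as you describe: with $m=(n-1)/2$, the hypotheses give $P(x)=-x^{m}P(1/x)$ and $Q(x)=-x^{n+m}Q(1/x)$ (the second reflection $k\mapsto n+m-k$ does map $\{m+1,\dots,n-1\}$ to itself), hence $P(1)=0$, $\deg P^{2}\le n-1$, $Q^{2}$ supported in degrees $\ge n+1$, and $P(x)Q(x)=x^{2n-1}P(1/x)Q(1/x)$ so that $r_k=r_{2n-1-k}$; since $2n-2-m=m+n-1$, the reflection splits the support $\{m+1,\dots,m+n-1\}$ into $\{m+1,\dots,n-1\}$ and $\{n,\dots,m+n-1\}$, and $P(1)Q(1)=0$ kills both halves. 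The only quibble is a remark, not a gap: the absence of an integer fixed point of $k\mapsto 2n-1-k$ holds for every integer $n$ (since $2n-1$ is odd); the oddness of $n$ is really used in making $m=(n-1)/2$ an integer and in the exponent bookkeeping $2m=n-1$, $2n-2-m=m+n-1$. Your argument is arguably cleaner and more transparent than the usual direct index-pairing cancellation in the double sum, and it makes visible why the cut-off at degree $n-1$ is harmless.
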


\begin{proof}[Proof of Theorem \ref{Thm3}]
 Let
\begin{equation*}
c_q(k,a)=(-1)^k\frac{(1+q^{4k+1})(aq^{2},q^{2}/a;q^{4})_k}
{(1+q)(aq^{4},q^{4}/a;q^{4})_k}q^{2k^2+k}.
\end{equation*}
In terms of  congruence \eqref{EE7}, we have, for $0\leq k\leq (n-1)/2$,
\begin{equation*}
c_q((n-1)/2-k,a)\equiv -c_q(k,a)\pmod{\Phi_n(-q)}.
\end{equation*}
Similarly, for $(n+1)/2\leq k\leq n-1$,
\begin{equation*}
c_q((3n-1)/2-k,a)\equiv -c_q(k,a)\pmod{\Phi_n(-q)}.
\end{equation*}
Then, from Lemma \ref{Lemma4}, we can obtain
\begin{equation}\label{EE3}
\sum_{k=0}^{n-1}\sum_{j=0}^{k}c_q(j,a)c_q(k-j,a)\equiv0\pmod{\Phi_n(-q)}.
\end{equation}
On the other hand, setting $d=2$, $r=1$ in the congruence \eqref{eq10}, we derive,
modulo ${(1-aq^{2n})(a-q^{2n})}$,
\begin{flalign}
\sum_{k=0}^{n-1}\sum_{j=0}^{k}c_q(j,a)c_q(k-j,a)
\equiv
[n]^2_{q^2}q^{(n-1)^2}
\bigg(\sum_{k=0}^{\frac{n-1}{2}}\frac{(aq^{2},q^{2}/a;q^{4})_k}
{[4k+1](q^{4},q^{2};q^{4})_k}q^{2k}\bigg)^2
.\label{EEE9}
\end{flalign}
Because $n$ is odd, the reduced form of the right-hand side of congruence \eqref{EEE9} has the factor $\Phi_n(-q)$.
Then, combining congruence \eqref{EE3}, we conclude that \eqref{EEE9} is true modulo $\Phi_n(-q)(1-aq^{2n})(a-q^{2n})$.
Putting $a\rightarrow1$, we complete the proof of Theorem \ref{Thm3}.
\end{proof}

\begin{proof}[Proof of Theorem \ref{Thm4}]
Let
\begin{align*}
c_q(k,a)=(-1)^k\frac{(1+q^{4k+1})(q^2,aq^{2},q^{2}/a;q^{4})_k}
{(1+q)(q^4,aq^{4},q^{4}/a;q^{4})_k}q^{2k^2+2k}.
\end{align*}
When $(n+1)/2\leq k\leq n-1$, we can easily deduce
\begin{align*}
c_q((3n-1)/2-k,a)\equiv -c_q(k,a)\pmod{\Phi_n(-q)^2},
\end{align*}
where we have used the fact $(q^2,q^4)_k\equiv0 \pmod{\Phi_n(q^2)}$ for $(n+1)/2\leq k\leq n-1$.
Then, we conclude
\begin{align}\label{EE4}
\sum_{k=0}^{n-1}\sum_{j=0}^{k}c_q(j,a)c_q(k-j,a)
\equiv\bigg(\sum_{k=0}^{\frac{n-1}{2}}c_q(k,a)\bigg)^2
\equiv0
\pmod{\Phi_n(-q)^2},
\end{align}
where the last congruence is true since $\sum_{k=0}^{\frac{n-1}{2}}c_q(k,a)$ is a special case of
the left-hand side of \eqref{E4} which is congruent to $0$ modulo $\Phi_n(-q)$.

Besides, setting $d=2$ and $r=1$ in congruence \eqref{E6} gives,
modulo $(1-aq^{2n})(a-q^{2n})$,
\begin{flalign}
\sum_{k=0}^{n-1}\sum_{j=0}^{k}c_q(j,a)c_q(k-j,a)
\equiv
[n]^2_{q^2}q^{(n-1)^2}
\bigg(\sum_{k=0}^{\frac{n-1}{2}}\frac{(aq^{2},q^{2}/a,q^2;q^{4})_k}
{(q^{4},q^{3},q^{5};q^{4})_k}q^{2k}\bigg)^2.\label{eq1}
\end{flalign}
Since $n$ is odd, the reduced form of the right-hand side of congruence \eqref{eq1} also has the factor $\Phi_n(-q)^2$,
 which means that \eqref{eq1} is true modulo $\Phi_n(-q)^2$.  Then, we derive that \eqref{eq1} holds modulo $\Phi_n(-q)^2(1-aq^{2n})(a-q^{2n})$.
Finally, we finish the proof of Theorem \ref{Thm4} with $a\rightarrow1$.
\end{proof}

\section{Some $q$-supercongruences on triple sums}
Our main aim in this section is  to show how to prove
$q$-supercongruences on triple sums.
We first propose two $q$-supercongruences on triple  sums related to \eqref{EEE8} and \eqref{EEE7}.
\begin{theorem}\label{Thm5}
Let $n$ be a positive odd integer and $d\geq 3$  an integer with $n\equiv1\pmod d$.
Let
\begin{equation*}
c_q(k)=(-1)^k\frac{(1+q^{2dk+1})(q^2;q^{2d})^3_k}{(1+q)(q^{2d};q^{2d})^3_k}q^{dk^2+2k(d-1)},
\end{equation*}
then, for any nonnegative integer $i$, $j$, $s$, there holds, modulo $\Phi_n(-q)^3\Phi_n(q)^2$,
\begin{flalign*}
\sum_{\substack{i+j+s\leq n-1}}c_q(i)c_q(j)c_q(s)
\equiv
(-1)^{\frac{3(n-1)}{d}}[n]^3_{q^2}q^{\frac{3(n-1)(n+1-d)}{d}}\bigg(\sum_{k=0}^{\frac{n-1}{d}}
\frac{(q^2;q^{2d})^2_k(q^d;q^{2d})_k q^{2(d-1)k}}{(q^{d+1},q^{2d},q^{2d+1};q^{2d})_k}\bigg)^3.
\end{flalign*}
\end{theorem}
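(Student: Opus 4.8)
The plan is to adapt the double-sum strategy from Theorems~\ref{Thm1} and~\ref{Thm2} to the triple-sum setting, using the $a$-parametrized ``creative microscoping'' identity of Lemma~\ref{Lemma3} as the source term. First I would introduce the parametrized summand
\begin{equation*}
c_q(k,a)=(-1)^k\frac{(1+q^{2dk+1})(aq^{2},q^{2}/a,q^2;q^{2d})_k}
{(1+q)(aq^{2d},q^{2d}/a,q^{2d};q^{2d})_k}q^{dk^2+2k(d-1)},
\end{equation*}
which specializes to $c_q(k)$ at $a=1$ and reduces to the left-hand side of \eqref{E4} (with $r=1$) upon summation. The key structural tool I would establish is a \emph{triple-sum analogue of Lemma~\ref{le1}}: if $c(k)=0$ for $(n-1)/d<k\le n-1$ (which holds here because $n\equiv1\pmod d$ forces $(q^2;q^{2d})_k\equiv0\pmod{\Phi_n(q^2)}$ in that range), then
\begin{equation*}
\sum_{i+j+s\le n-1}c(i)c(j)c(s)=\bigg(\sum_{k=0}^{\frac{n-1}{d}}c(k)\bigg)^{3},
\end{equation*}
with an analogous ``carry'' identity $\sum_{i+j+s\le ln+k}c(i)c(j)c(s)=\big(\sum_{t}c(tn)\cdots\big)\cdot\sum_{i+j+s\le k}c(i)c(j)c(s)$ when $c(ln+k)/c(ln)=c(k)$. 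The proof of this is the same bookkeeping as in Lemma~\ref{le1}: split the triple-index simplex according to residues mod $n$ and observe that in every ``boundary'' block one of the three factors vanishes because its index lies in the forbidden range.

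Granting this lemma, the argument splits along the lines of the proof of Theorem~\ref{Thm2}. Setting $a=q^{\pm2n}$ kills the summand for $(n-1)/d<k\le n-1$, so the triple-sum collapses to the cube of $\sum_{k=0}^{(n-1)/d}c_q(k,q^{\pm2n})$; evaluating that inner sum by Lemma~\ref{Lemma3} with $a=q^{\pm2n}$ and $r=1$ yields exactly the cubed hypergeometric expression on the right-hand side (the two sign choices giving the same value, as in the earlier proofs). This establishes the congruence modulo $(1-aq^{2n})(a-q^{2n})$ for the $a$-parametrized triple sum. Separately, one shows the triple sum is $\equiv0\pmod{\Phi_n(-q)^3}$: by Lemma~\ref{Lemma3} the factor $\sum_{k=0}^{(n-1)/d}c_q(k,a)$ is $\equiv0\pmod{\Phi_n(-q)}$, and one needs the \emph{cubed} version of that vanishing; this requires a triple-sum version of Guo--Li's Lemma~\ref{Lemma4} (or a direct argument via the $q\to q^{-1}$ symmetry together with the antisymmetry $c_q((n-1)/2-k,a)\equiv-c_q(k,a)\pmod{\Phi_n(-q)}$ coming from Lemma~\ref{Lemma5}) to promote $\Phi_n(-q)$ to $\Phi_n(-q)^3$ on the triple sum. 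The right-hand side visibly contains $\Phi_n(-q)^3$ as well (the factor $[n]^3_{q^2}$ contributes $\Phi_n(-q)^3$ since $n$ is odd), so the congruence is valid mod $\Phi_n(-q)^3$; combining with the mod-$(1-aq^{2n})(a-q^{2n})$ statement and the pairwise coprimality of $\Phi_n(-q)$, $1-aq^{2n}$, $a-q^{2n}$ gives the result modulo $\Phi_n(-q)^3(1-aq^{2n})(a-q^{2n})$, and finally $a\to1$ yields the claim modulo $\Phi_n(-q)^3\Phi_n(q)^2$.

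The main obstacle I anticipate is the combinatorial core: proving the triple-sum collapse identity and, more delicately, the triple-sum antisymmetry lemma that upgrades divisibility by $\Phi_n(-q)$ to $\Phi_n(-q)^3$. For double sums this is exactly what El~Bachraoui's lemma and Guo--Li's Lemma~\ref{Lemma4} provide; for triple sums one must either generalize Lemma~\ref{Lemma4} to three-fold convolutions of a sequence satisfying $a_k=-a_{(n-1)/2-k}$ and $a_k=-a_{(3n-1)/2-k}$ (showing $\sum_{i+j+s\le n-1}a_ia_ja_s=0$, or more precisely that it is divisible by an appropriate power), or re-derive the mod-$\Phi_n(-q)^3$ vanishing directly from the factorization $\sum_{i+j+s\le n-1}c_q(i)c_q(j)c_q(s)=\big(\sum_{k=0}^{(n-1)/d}c_q(k,a)\big)^3$ once the collapse identity is in hand — the latter route is cleaner, since cubing the single factor that is already known to be $\equiv0\pmod{\Phi_n(-q)}$ immediately gives $\Phi_n(-q)^3$. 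Everything else — the specializations $a=q^{\pm2n}$, the invocation of Lemmas~\ref{Lemma3} and~\ref{Lemma5}, and the Chinese-remainder-theorem patching — is routine and parallels Sections~\ref{Section3} and~\ref{Section4} verbatim.
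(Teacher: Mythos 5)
Your skeleton follows the paper's route: the same parametrized summand $c_q(k,a)$, the same triple-sum collapse lemma (the paper proves it as Lemma~\ref{Lemma6}), evaluation at $a=q^{\pm 2n}$ via Lemma~\ref{Lemma3} with $r=1$, coprimality patching, and the final limit $a\to1$. The genuine gap is in your cyclotomic step: you assert that the $a$-parametric triple sum is $\equiv0\pmod{\Phi_n(-q)^3}$ and justify it by ``cubing the single factor that is already $\equiv0\pmod{\Phi_n(-q)}$''. That argument does not work, because the collapse identity
\begin{equation*}
\sum_{i+j+s\le n-1}c_q(i,a)c_q(j,a)c_q(s,a)=\bigg(\sum_{k=0}^{\frac{n-1}{d}}c_q(k,a)\bigg)^{3}
\end{equation*}
is an exact identity only when $c_q(k,a)$ vanishes identically for $(n-1)/d<k\le n-1$, which is the case at $a=q^{\pm2n}$ but not for generic $a$. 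For generic $a$ each discarded term of the simplex carries the factor $(q^2;q^{2d})_k$ with $k$ in the forbidden range only once, so the collapse holds merely as a congruence modulo $\Phi_n(q^2)$, and the most you can conclude is that the parametric triple sum is divisible by $\Phi_n(-q)$ to the \emph{first} power. Your fallback (a triple-sum analogue of Lemma~\ref{Lemma4}, or the antisymmetry $c_q((n-1)/2-k,a)\equiv-c_q(k,a)\pmod{\Phi_n(-q)}$) is left unproved and would in any case again deliver only the first power, since the antisymmetry itself is only a congruence modulo $\Phi_n(-q)$. A symptom that something is off: if your mod-$\Phi_n(-q)^3$ parametric claim were correct, your own final step would prove the theorem modulo $\Phi_n(-q)^5\Phi_n(q)^2$, strictly stronger than what is asserted.

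The repair is that only the first power is needed before specializing, and this is exactly what the paper does: as in \eqref{EEEE1}, the parametric triple sum is congruent to the cube, hence $\equiv0\pmod{\Phi_n(-q)}$ (the inner sum being the left-hand side of \eqref{E4}, which is divisible by $\Phi_n(-q)$ through $[n]_{q^2}$); the right-hand side of \eqref{EEEE2} is also $\equiv0\pmod{\Phi_n(-q)}$ because of $[n]^3_{q^2}$; therefore \eqref{EEEE2} holds modulo $\Phi_n(-q)(1-aq^{2n})(a-q^{2n})$ by coprimality. Letting $a\to1$, the factor $(1-q^{2n})^2$ supplies $\Phi_n(q)^2\Phi_n(-q)^2$, which together with the single $\Phi_n(-q)$ gives precisely the stated modulus $\Phi_n(-q)^3\Phi_n(q)^2$. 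In other words, the two extra powers of $\Phi_n(-q)$ come from the specialization $a\to1$ of $(1-aq^{2n})(a-q^{2n})$, not from any stronger parametric vanishing; once you replace your mod-$\Phi_n(-q)^3$ claim by the (correct and sufficient) mod-$\Phi_n(-q)$ statement, the rest of your outline is sound and coincides with the paper's proof.
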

The proof of Theorem \ref{Thm5} will highly rely on the following lemma,
which is a special case of the result recently proposed by El Bachraoui \cite{Bachraoui2}.
Different from El Bachraoui's method, here,  a new proof of this lemma  shall be shown.
\begin{lemma}\label{Lemma6}
Let $n$ be a positive integer and $d\geq 3$, $r$ integers with $n\equiv r\pmod d$, $n-\lfloor d(n-1)/3\rfloor\leq r\leq n$.
Let $\{c(k)\}^\infty_{k=0}$ be a sequence of complex numbers.
If $c(k)=0$ for $(n-r)/d<k<n$, then, for any nonnegative integer $i$, $j$, $s$, there holds
\begin{equation}\label{EEEE3}
\sum_{\substack{i+j+s\leq n-1 }}c(i)c(j)c(s)=\bigg(\sum_{i=0}^{\frac{n-r}{d}}c(i)\bigg)^3.
\end{equation}
Furthermore, if $c(ln+k)/c(ln)=c(k)$ for all nonnegative integers $k$ and $l$ such that $0\leq k\leq n-1$, then
\begin{flalign}
&\sum_{\substack{i+j+s=ln+k}}c(i)c(j)c(s)\nonumber\\
&=\sum_{a=0}^{l}c(an)\sum_{t=0}^{l-a}c(tn)c((l-a-t)n)\sum_{i=0}^{k}c(i)\sum_{j=0}^{k-i}c(j)c(k-i-j)
.\label{EEEE4}
\end{flalign}
\end{lemma}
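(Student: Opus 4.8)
The plan is to prove Lemma \ref{Lemma6} in two stages, mirroring the structure of the proof of Lemma \ref{le1} but carrying out the bookkeeping for a triple convolution rather than a double one.

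For \eqref{EEEE3}, the argument is essentially immediate. Under the hypothesis $c(k)=0$ for $(n-r)/d<k<n$, the only indices $i,j,s$ that can contribute to $\sum_{i+j+s\le n-1}c(i)c(j)c(s)$ lie in $\{0,1,\dots,(n-r)/d\}$ (note $c(n-1)$ may or may not vanish, but since $i+j+s\le n-1$ forces any single index below $n-1$ unless the other two are $0$, and in that borderline case $c(n-1)$ being nonzero would need $n-1\le (n-r)/d$, i.e. $r\le n-d(n-1)$, which is excluded by $r\ge n-\lfloor d(n-1)/3\rfloor$ for $n>1$; for $n=1$ the statement is trivial). Crucially, the constraint $n-\lfloor d(n-1)/3\rfloor\le r$ guarantees $3\cdot(n-r)/d\le n-1$, so \emph{every} triple $(i,j,s)$ with each coordinate in $\{0,\dots,(n-r)/d\}$ automatically satisfies $i+j+s\le n-1$. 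Hence the restricted sum factors completely as $\bigl(\sum_{i=0}^{(n-r)/d}c(i)\bigr)^3$. I would state this range check explicitly as the one place the hypothesis on $r$ is used.

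For \eqref{EEEE4}, I would split the sum $\sum_{i+j+s=ln+k}c(i)c(j)c(s)$ according to the residues of $i$, $j$, $s$ modulo $n$. Write $i=an+i'$, $j=bn+j'$, $s=cn+s'$ with $0\le i',j',s'\le n-1$; then $a+b+c\in\{l-2,l-1,l\}$ depending on the carries from $i'+j'+s'$. Using the multiplicativity hypothesis $c(mn+k')=c(mn)c(k')$ (which follows from $c(mn+k')/c(mn)=c(k')$), each term factors as $c(an)c(bn)c(cn)\,c(i')c(j')c(s')$. Collecting by the value $a+b+c$ and by the ``inner'' sum $i'+j'+s'$ (which is either $k$, $n+k$, or $2n+k$), one gets a main term $\sum_{a+b+c=l}c(an)c(bn)c(cn)\sum_{i'+j'+s'=k}c(i')c(j')c(s')$ plus correction terms indexed by inner sums equal to $n+k$ or $2n+k$. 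I would then show these corrections vanish: in each such term at least one of $i',j',s'$ exceeds $(n-r)/d$ while being $<n$ (by a counting argument: if $i'+j'+s'\ge n+k\ge n$ and all three were $\le (n-r)/d$, then $n\le 3(n-r)/d\le n-1$, a contradiction), so the corresponding factor $c(\cdot)$ is zero — exactly the mechanism used for \eqref{EEEEEE1} in Lemma \ref{le1}. Finally, I would rewrite the surviving main term's inner triple sum $\sum_{i'+j'+s'=k}c(i')c(j')c(s')$ as $\sum_{i=0}^{k}c(i)\sum_{j=0}^{k-i}c(j)c(k-i-j)$ and the outer factor $\sum_{a+b+c=l}c(an)c(bn)c(cn)$ as $\sum_{a=0}^{l}c(an)\sum_{t=0}^{l-a}c(tn)c((l-a-t)n)$, which is precisely the claimed identity.

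The main obstacle is the vanishing of the correction terms: one must carefully track which inner sums $i'+j'+s'$ can arise (only $k$, $n+k$, $2n+k$ are possible since each $i'\le n-1$), and then argue that whenever $i'+j'+s'\ge n+k$ at least one coordinate lies strictly between $(n-r)/d$ and $n$, so that the factorization forces a zero factor. This pigeonhole step is where the hypothesis $n-\lfloor d(n-1)/3\rfloor\le r$ (equivalently $3(n-r)/d\le n-1$) is essential, and it is the analogue of the two-case analysis ($r_j>k$ versus $(n-r)/d>r_j>k$) in the proof of Lemma \ref{le1}, now with three indices to manage. The rest — the residue decomposition and the algebraic repackaging of the two nested sums — is routine bookkeeping.
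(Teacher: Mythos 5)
Your proof is correct, but for the key identity \eqref{EEEE4} it takes a genuinely different route from the paper. The paper splits only the first index $i$ into blocks of length $n$ and then invokes the already-established double-sum identity \eqref{eq5} of Lemma~\ref{le1} for the inner sum over $j$; the unwanted cross term is then killed by a case analysis comparing $r_i$ with $k$ and $(n-r)/d$ (either $c(r_i)=0$, or every product $c(j)c(n+k-r_i-j)$ vanishes), so the paper's argument is a recursive reduction of the triple convolution to the double one. You instead decompose all three indices simultaneously, $i=an+i'$, $j=bn+j'$, $s=cn+s'$, factor each term via the multiplicativity hypothesis, sort by $i'+j'+s'\in\{k,\,n+k,\,2n+k\}$, and dispose of the two correction families with a single pigeonhole: if $i'+j'+s'\geq n+k$, then not all remainders can be $\leq (n-r)/d$ (since $3(n-r)/d\leq n-1$), so some factor has argument in the interval $((n-r)/d,\,n)$ and vanishes. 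This is exactly where the hypothesis on $r$ enters, and the same inequality is what makes \eqref{EEEE3} factor, a point the paper leaves implicit. Your version buys symmetry and an immediate extension to $m$-fold convolutions (El Bachraoui's general setting), at the cost of redoing bookkeeping that the paper recycles from Lemma~\ref{le1}; the paper's version is shorter on the page because it reuses \eqref{eq5}, but its two-case elimination is more delicate. One small inaccuracy in your write-up of \eqref{EEEE3}: the worry that ``$c(n-1)$ may or may not vanish'' is unnecessary, since for $n>1$ the hypothesis $c(k)=0$ for $(n-r)/d<k<n$ already covers $k=n-1$ (because $(n-r)/d\leq (n-1)/3<n-1$); your subsequent range check renders the point moot, so the conclusion is unaffected.
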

\begin{proof}
\eqref{EEEE3} is obvious from the assumption.
Next, we shall present the proof of  equality \eqref{EEEE4} via \eqref{eq5}.
\begin{flalign*}
&\sum_{\substack{i+j+s=ln+k}}c(i)c(j)c(s)\\
&=\sum_{i=0}^{k}c(i)\sum_{j=0}^{ln+k-i}c(j)c(ln+k-i-j)+\sum_{a=0}^{l-1}\sum_{i=an+k+1}^{(a+1)n+k}c(i)\sum_{j=0}^{ln+k-i}c(j)c(ln+k-i-j)\\
&=\sum_{t=0}^{l}c(tn)c((l-t)n)\sum_{i=0}^{k}c(i)\sum_{j=0}^{k-i}c(j)c(k-i-j)\\
&\quad+\sum_{a=0}^{l-1}\sum_{t=0}^{l-a-1}c(tn)c((l-a-t-1)n)
\sum_{i=an+k+1}^{(a+1)n+k}c(i)\sum_{j=0}^{(a+1)n+k-i}c(j)c((a+1)n+k-i-j)\\
&=\sum_{t=0}^{l}c(tn)c((l-t)n)\sum_{r_i=0}^{k}c(r_i)\sum_{j=0}^{k-r_i}c(j)c(k-r_i-j)\\
&\quad+\sum_{a=0}^{l-1}c(an)\sum_{t=0}^{l-a-1}c(tn)c((l-a-t-1)n)\sum_{r_i=k+1}^{n-1}c(r_i)\sum_{j=0}^{n+k-r_i}c(j)c(n+k-r_i-j)\\
&\quad+\sum_{a=0}^{l-1}c((a+1)n)\sum_{t=0}^{l-a-1}c(tn)c((l-a-t-1)n)\sum_{r_i=0}^{k}c(r_i)\sum_{j=0}^{k-r_i}c(j)c(k-r_i-j),
\end{flalign*}
where the second equality is obtained from \eqref{eq5}, and $r_i$ is the remainder of the division of $i$ by $n$.

Then, we claim
\begin{equation}\label{EEEEEE2}
\sum_{a=0}^{l-1}c(an)\sum_{t=0}^{l-a-1}c(tn)c((l-a-t-1)n)\sum_{r_i=k+1}^{n-1}c(r_i)\sum_{j=0}^{n+k-r_i}c(j)c(n+k-r_i-j)=0.
\end{equation}
In fact, when $r_i>k\geq (n-r)/d$ or $r_i\geq (n-r)/d>k$, we have
$$c(r_i)=0.$$
When $(n-r)/d>r_i>k$, there holds $$c(j)c(n+k-r_i-j)=0.$$
Thus, \eqref{EEEEEE2} is true and we get
\begin{flalign*}
&\sum_{\substack{i+j+s=ln+k}}c(i)c(j)c(s)\\
&=\sum_{t=0}^{l}c(tn)c((l-t)n)\sum_{r_i=0}^{k}c(r_i)\sum_{j=0}^{k-r_i}c(j)c(k-r_i-j)\\
&\quad+\sum_{a=0}^{l-1}c((a+1)n)\sum_{t=0}^{l-a-1}c(tn)c((l-a-t-1)n)\sum_{r_i=0}^{k}c(r_i)\sum_{j=0}^{k-r_i}c(j)c(k-r_i-j)\\
&=\sum_{a=0}^{l}c(an)\sum_{t=0}^{l-a}c(tn)c((l-a-t)n)\sum_{r_i=0}^{k}c(r_i)\sum_{j=0}^{k-r_i}c(j)c(k-r_i-j)
\end{flalign*}
as desired.
\end{proof}
\begin{proof}[Proof of Theorem \ref{Thm5}]
Let
\begin{equation*}
c_q(k,a)=(-1)^k\frac{(1+q^{2dk+1})(aq^{2},q^{2}/a;q^{2d})_k(q^{2};q^{2d})_k}
{(1+q)(aq^{2d},q^{2d}/a;q^{2d})_k(q^{2d};q^{2d})_k}q^{dk^2+2k(d-1)}.
\end{equation*}
Since $n\equiv 1\pmod{d}$, we have $(q^{2};q^{2d})_k\equiv0\pmod{\Phi_n(q^2)}$ for $(n-1)/d<k\leq n-1$.
From Lemma \ref{Lemma6}, we deduce
\begin{equation}\label{EEEE1}
\sum_{\substack{i+j+s\leq n-1 }}c_q(i,a)c_q(j,a)c_q(s,a)
\equiv
\bigg(\sum_{i=0}^{\frac{n-1}{d}}c_q(i,a)\bigg)^3
\equiv
0\pmod{\Phi_n(-q)}.
\end{equation}
Similarly, by making the substitution $a=q^{2n}$ or $a=q^{-2n}$, $r=1$ in \eqref{E4},
we obtain
\begin{align*}
&\sum_{k=0}^{\frac{n-1}{d}}(-1)^k\frac{(1+q^{2dk+1})(q^{2-2n},q^{2+2n},q^2;q^{2d})_k}{(1+q)(q^{2d-2n},q^{2d+2n},q^{2d};q^{2d})}q^{dk^2+2k(d-1)}\\
&=(-1)^{\frac{n-1}{d}}[n]_{q^2}q^{(n-1)(n+1-d)}{d}\sum_{k=0}^{\frac{n-1}{d}}\frac{(q^{2-2n},q^{2+2n},q^d;q^{2d})_k}{(q^{2d},q^{d+1},q^{2d+1};q^{2d})_k}q^{2k(d-1)}.
\end{align*}
Note  that $c_q(k,q^{2n})=c_q(k,q^{-2n})=0$ for $k$ in the range $(n-1)/d<k\leq n-1$.
Then, the use of Lemma \ref{Lemma6} gives
\begin{flalign*}
&\sum_{\substack{i+j+s\leq n-1 }}c_q(i,q^{2n})c_q(j,q^{2n})c_q(s,q^{2n})\\
&=
(-1)^{\frac{3(n-1)}{d}}[n]^3_{q^2}q^{\frac{3(n-1)(n+1-d)}{d}}
\bigg(\sum_{k=0}^{\frac{n-1}{d}}\frac{(q^{2+2n},q^{2-2n},q^d;q^{2d})_k q^{2k(d-1)}}
{(q^{2d},q^{d+1},q^{2d+1};q^{2d})_k}\bigg)^3.
\end{flalign*}
Namely,
 we  get the following $q$-congruence for the variable $a$: modulo $(1-aq^{2n})(a-q^{2n})$,
\begin{flalign}
&\sum_{\substack{i+j+s\leq n-1 }}c_q(i,a)c_q(j,a)c_q(s,a)\nonumber\\
&\equiv
(-1)^{\frac{3(n-1)}{d}}[n]^3_{q^2}q^{\frac{3(n-1)(n+1-d)}{d}}
\bigg(\sum_{k=0}^{\frac{n-1}{d}}\frac{(aq^{2},q^{2}/a,q^d;q^{2d})_k q^{2k(d-1)}}
{(q^{2d},q^{d+1},q^{2d+1};q^{2d})_k}\bigg)^3.\label{EEEE2}
\end{flalign}
It is easy to see that the right-hand side of \eqref{EEEE2} is also congruent to $0$ modulo $\Phi_n(-q)$.
As a result, the congruence \eqref{EEEE2} holds  modulo $\Phi_n(-q)$.
Since $1-aq^{2n}$, $a-q^{2n}$ and $\Phi_n(-q)$ are pairwise relatively prime polynomials,
we conclude \eqref{EEEE2} holds modulo $\Phi_n(-q)(1-aq^{2n})(a-q^{2n})$.
The proof then follows from the congruence \eqref{EEEE2} with $a\rightarrow 1$.
\end{proof}

Likewise, we can consider another result. Since the proof is exactly the same as that of Theorem \ref{Thm5},
we omit the details here.
\begin{theorem}
Let $n$ be a positive odd integer and $d\geq 3$ an integer with $n\equiv1\pmod d$.
Let
\begin{equation*}
c_q(k)=(-1)^k\frac{(1+q^{2dk+1})(q^2;q^{2d})^2_k}{(1+q)(q^{2d};q^{2d})^2_k}q^{dk^2+(d-1)k},
\end{equation*}
then, for any nonnegative integer  $i$, $j$, $s$, there holds, modulo $\Phi_n(-q)^2\Phi_n(q)^2$,
\begin{flalign*}
\sum_{\substack{i+j+s\leq n-1}}c_q(i)c_q(j)c_q(s)
\equiv
(-1)^{\frac{3(n-1)}{d}}[n]^3_{q^2}q^{\frac{3(n-1)(n+1-d)}{d}}\bigg(\sum_{k=0}^{\frac{n-1}{d}}
\frac{(q^2,q;q^{2d})_k q^{2(d-1)k}}{(q^{2d},q^{2d+1};q^{2d})_k}\bigg)^3.
\end{flalign*}
\end{theorem}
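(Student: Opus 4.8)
The plan is to imitate the creative-microscoping argument used for Theorem~\ref{Thm5}, the only change being that the parametrised family is now the one attached to Lemma~\ref{Lemma2} rather than to Lemma~\ref{Lemma3}. First I would set
\[
c_q(k,a)=(-1)^k\frac{(1+q^{2dk+1})(aq^{2},q^{2}/a;q^{2d})_k}{(1+q)(aq^{2d},q^{2d}/a;q^{2d})_k}\,q^{dk^2+(d-1)k},
\]
so that $c_q(k,1)=c_q(k)$ is the summand of the theorem and $c_q(k,a)$ is exactly the $k$-th term on the left of \eqref{E1} with $r=1$. Note that the hypotheses of Lemma~\ref{Lemma6} are met with $r=1$ precisely because $d\ge 3$: then $\lfloor d(n-1)/3\rfloor\ge n-1$, so $n-\lfloor d(n-1)/3\rfloor\le 1\le n$, and $n\equiv 1\pmod d$ is assumed.

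Next I would specialise $a=q^{\pm 2n}$. Here the factor $(q^{2}/a;q^{2d})_k$ (for $a=q^{2n}$) and the factor $(aq^{2};q^{2d})_k$ (for $a=q^{-2n}$) both equal $(q^{2-2n};q^{2d})_k$, which contains the vanishing term $1-q^{\,2-2n+2d(n-1)/d}=1-q^{0}=0$ as soon as $k>(n-1)/d$; hence $c_q(k,q^{2n})=c_q(k,q^{-2n})=0$ for $(n-1)/d<k\le n-1$, while the denominators stay invertible there since $\gcd(n,d)=1$. Therefore the first part, \eqref{EEEE3}, of Lemma~\ref{Lemma6} applies at $a=q^{\pm 2n}$, giving
\[
\sum_{i+j+s\le n-1}c_q(i,a)c_q(j,a)c_q(s,a)=\Big(\sum_{k=0}^{(n-1)/d}c_q(k,a)\Big)^{3}\qquad\bigl(a=q^{\pm 2n}\bigr).
\]
At the same time, at $a=q^{\pm 2n}$ the modulus $\Phi_n(-q)(1-aq^{2n})(a-q^{2n})$ in \eqref{E1} collapses to $0$, so \eqref{E1} with $r=1$ holds there as an exact identity; feeding it into the cube above evaluates the triple sum at $a=q^{\pm 2n}$ as the cube of the right-hand side of \eqref{E1} (with $r=1$), taken at $a=q^{\pm 2n}$.

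Now let $G(a)$ be the cube of the right-hand side of \eqref{E1} with $r=1$, viewed as a function of the parameter $a$, and set $F(a):=\sum_{i+j+s\le n-1}c_q(i,a)c_q(j,a)c_q(s,a)$. Both $F$ and $G$ are invariant under $a\mapsto a^{-1}$, and by the previous step $F(q^{2n})=G(q^{2n})$, hence also $F(q^{-2n})=G(q^{-2n})$. Since the denominators occurring in $F-G$ are coprime to $(1-aq^{2n})(a-q^{2n})$, the standard creative-microscoping interpolation yields $(1-aq^{2n})(a-q^{2n})\mid F(a)-G(a)$, i.e.
\[
\sum_{i+j+s\le n-1}c_q(i,a)c_q(j,a)c_q(s,a)\equiv G(a)\pmod{(1-aq^{2n})(a-q^{2n})}.
\]
Finally I would let $a\to 1$: the modulus degenerates to $(1-q^{2n})^{2}$, which is divisible by $\Phi_n(q)^{2}\Phi_n(-q)^{2}$ since $n$ is odd, while $G(1)$ simplifies — after cancelling one copy of $(q^{2};q^{2d})_k$ — to the asserted right-hand side; this gives the congruence modulo $\Phi_n(-q)^{2}\Phi_n(q)^{2}$.

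The one genuinely delicate point will be the bookkeeping of the second step: one must check that at $a=q^{\pm 2n}$ the summand vanishes for \emph{every} $k$ with $(n-1)/d<k\le n-1$, so that \eqref{EEEE3} applies verbatim (not merely modulo $\Phi_n(q^{2})$), and that the right-hand side of \eqref{E1} at $a=q^{\pm 2n}$, once cubed, matches $G(q^{\pm 2n})$; all remaining manipulations are routine $q$-shifted-factorial identities such as $(q^{2},q;q^{2d})_k/(q^{2d},q^{2d+1};q^{2d})_k=(q^{2};q^{2d})_k/\bigl([2dk+1]\,(q^{2d};q^{2d})_k\bigr)$. Note that, unlike in Theorem~\ref{Thm5}, for generic $a$ no spare factor $(q^{2};q^{2d})_k$ survives in $c_q(k,a)$, so the auxiliary congruence $F(a)\equiv 0\pmod{\Phi_n(-q)}$ is unavailable here; this is precisely why the modulus is $\Phi_n(-q)^{2}\Phi_n(q)^{2}$ and not $\Phi_n(-q)^{3}\Phi_n(q)^{2}$.
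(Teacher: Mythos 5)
Your proposal is correct and follows essentially the same route the paper intends: it is the proof of Theorem~\ref{Thm5} transplanted verbatim, with Lemma~\ref{Lemma2} (i.e.\ \eqref{E1} with $r=1$) in place of Lemma~\ref{Lemma3}, the specializations $a=q^{\pm 2n}$ combined with Lemma~\ref{Lemma6}, and the limit $a\to 1$ turning $(1-aq^{2n})(a-q^{2n})$ into $(1-q^{2n})^2$, which supplies the modulus $\Phi_n(-q)^2\Phi_n(q)^2$. Your closing remark is also accurate: here no extra $\Phi_n(-q)$-divisibility step is available or needed, which is exactly why this theorem carries one fewer power of $\Phi_n(-q)$ than Theorem~\ref{Thm5}.
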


Besides, with the help of the Lemma \ref{Lemma6} and the Chinese remainder theorem, we can confirm the following result.
\begin{theorem}\label{THM1}
Let $n>1$, $d\geq3$ be integers with $n\equiv1\pmod d$. Let
\begin{equation*}
c_q(k)=[2dk+1]\frac{(q;q^d)^4_k}{(q^d;q^d)^4_k}q^{(d-2)k}.
\end{equation*}
Then, for any nonnegative integer $i$, $j$, $s$, there holds
\begin{align}\label{AE1}
\sum_{\substack{i+j+s\leq n-1}}c_q(i)c_q(j)c_q(s)
\equiv
[n]^3q^{\frac{3(1-n)}{d}}\frac{(q^2;q^d)^3_{\frac{n-1}{d}}}{(q^d;q^d)^3_{\frac{n-1}{d}}}\pmod{[n]\Phi_n(q)^3}.
\end{align}
\end{theorem}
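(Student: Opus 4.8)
The plan is to run the creative-microscoping scheme used for Theorems~\ref{Thm2} and \ref{Thm5}: introduce an auxiliary parameter $a$, collapse the parametrized triple sum to a cube of a single sum via Lemma~\ref{Lemma6}, feed in a known single-sum $q$-analogue of van Hamme's (G.2), and finally let $a\to1$, the factorization $(1-q^n)^2=\prod_{\delta\mid n}\Phi_\delta(q)^2$ producing all the cyclotomic factors of $[n]\Phi_n(q)^3$ apart from one extra power of $\Phi_n(q)$, which will come from the factor $(q;q^d)_k^2$ built into $c_q(k,a)$.

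Writing $m=(n-1)/d$, the first step is to set
\[
c_q(k,a)=[2dk+1]\,\frac{(aq,q/a;q^d)_k\,(q;q^d)_k^2}{(aq^d,q^d/a;q^d)_k\,(q^d;q^d)_k^2}\,q^{(d-2)k},
\]
so that $c_q(k,1)=c_q(k)$. Since $n\equiv1\pmod d$, at $a=q^{n}$ one has $(q/a;q^d)_k=(q^{1-n};q^d)_k=\prod_{j=0}^{k-1}(1-q^{(j-m)d})$, which is identically $0$ once $k>m$; likewise $c_q(k,q^{-n})=0$ for $k>m$. A routine $\Phi_n(q)$-adic count moreover shows that $c_q(k,a)\equiv0\pmod{\Phi_n(q)^2}$ for every $a$ and every $k$ with $m<k\le n-1$, the numerator $(q;q^d)_k^2$ contributing the square of $\Phi_n(q)$ as soon as $k$ exceeds $m$, while the denominator (and, for generic $a$, the $a$-parts) stay units modulo $\Phi_n(q)$.

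The single-sum input I would invoke is a creative-microscoping $q$-analogue of (G.2) (in the spirit of \cite{Wang_Xu} and the Guo--Schlosser circle of identities): modulo $\Phi_n(q)(1-aq^n)(a-q^n)$,
\[
\sum_{k=0}^{m}c_q(k,a)\equiv S_a,\qquad\text{with}\quad [n]\mid S_a,\ \ S_1=[n]\,q^{(1-n)/d}\,\frac{(q^2;q^d)_m}{(q^d;q^d)_m}.
\]
Because $d\ge3$, whenever $i,j,s\le m$ we have $i+j+s\le 3m\le n-1$, so together with the vanishing of $c_q(k,q^{\pm n})$ beyond $k=m$, Lemma~\ref{Lemma6} gives $\sum_{i+j+s\le n-1}c_q(i,q^{\pm n})c_q(j,q^{\pm n})c_q(s,q^{\pm n})=\bigl(\sum_{k=0}^{m}c_q(k,q^{\pm n})\bigr)^3$; combining this with the single-sum congruence specialized at $a=q^{\pm n}$ (where that congruence becomes an equality) yields
\[
\sum_{i+j+s\le n-1}c_q(i,a)c_q(j,a)c_q(s,a)\equiv S_a^3\pmod{(1-aq^n)(a-q^n)} .
\]
On the other hand, any term of the triple sum with an index larger than $m$ is divisible by $\Phi_n(q)^2$ by the valuation remark, so the triple sum is $\equiv\bigl(\sum_{k=0}^{m}c_q(k,a)\bigr)^3\pmod{\Phi_n(q)^2}$; and since $\Phi_n(q)\mid S_a$ forces $\sum_{k=0}^{m}c_q(k,a)\equiv0\pmod{\Phi_n(q)}$, both this cube and $S_a^3$ are $\equiv0\pmod{\Phi_n(q)^3}$, whence $\sum_{i+j+s\le n-1}c_q(i,a)c_q(j,a)c_q(s,a)\equiv S_a^3\pmod{\Phi_n(q)^2}$.

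The two congruences just obtained have coprime moduli, so the Chinese remainder theorem gives the triple-sum congruence modulo $\Phi_n(q)^2(1-aq^n)(a-q^n)$; letting $a\to1$ and using $(1-q^n)^2=\prod_{\delta\mid n}\Phi_\delta(q)^2$ upgrades this to a congruence modulo $\Phi_n(q)^4\prod_{\delta\mid n,\ \delta<n}\Phi_\delta(q)^2$, which is a multiple of $[n]\Phi_n(q)^3$, and whose right-hand side $S_1^3=[n]^3q^{3(1-n)/d}(q^2;q^d)_m^3/(q^d;q^d)_m^3$ is exactly the right-hand side of \eqref{AE1}. I expect the only genuinely delicate points to be fixing the correct single-sum $q$-(G.2) congruence with a free parameter $a$ (in particular checking that its right-hand side carries the factor $[n]$ and reduces at $a=1$ to $S_1$) and the $\Phi_n(q)$-adic bookkeeping that makes the off-diagonal terms of the triple sum divisible by $\Phi_n(q)^2$; once the parametrization above is in place, everything else is the routine creative-microscoping machinery.
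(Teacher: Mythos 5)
Your setup (introducing $a$, collapsing the parametrized triple sum to a cube via Lemma~\ref{Lemma6}, feeding in a parametrized single-sum $q$-(G.2) congruence, then letting $a\to1$) is the right general machinery, and your intermediate congruence
$\sum_{i+j+s\le n-1}c_q(i,a)c_q(j,a)c_q(s,a)\equiv S_a^3 \pmod{\Phi_n(q)^2(1-aq^n)(a-q^n)}$
is fine. The genuine gap is in the very last step: from a congruence modulo $(1-aq^n)(a-q^n)$ you may \emph{not} conclude, after setting $a=1$, a congruence modulo $(1-q^n)^2=\prod_{\delta\mid n}\Phi_\delta(q)^2$. The difference of the two sides equals $\Phi_n(q)^2(1-aq^n)(a-q^n)$ times a cofactor that is a rational function of $q$; its denominator comes from $(aq^d,q^d/a;q^d)_k$ with $k$ up to $n-1$, which at $a=1$ becomes $(q^d;q^d)_k^2$, and this \emph{is} divisible by $\Phi_\delta(q)$ for proper divisors $\delta>1$ of $n$ (since $\gcd(n,d)=1$, $\Phi_\delta(q)\mid 1-q^{dj}$ whenever $\delta\mid j$). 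These denominator factors can cancel exactly the $\Phi_\delta(q)$-parts of $(1-q^n)^2$, so the $a\to1$ limit only yields the $\Phi_n(q)$-content, i.e.\ a congruence modulo $\Phi_n(q)^4$. Your claimed modulus $\Phi_n(q)^4\prod_{\delta\mid n,\,\delta<n}\Phi_\delta(q)^2$ is therefore unjustified, and what you actually prove is \eqref{AE1} modulo $\Phi_n(q)^4$ only; this coincides with $[n]\Phi_n(q)^3$ when $n$ is prime but is strictly weaker for composite $n$, since the factor $[n]=\prod_{\delta\mid n,\,\delta>1}\Phi_\delta(q)$ is missing.

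The paper closes exactly this hole with two devices you omit. First, it proves separately that the triple sum is $\equiv 0\pmod{[n]}$ (congruence \eqref{EEEE6}) by evaluating at every $n$-th root of unity $\zeta\neq1$: using the periodicity $z_\zeta(lm'+k)=z_\zeta(lm')z_\zeta(k)$ and the second part \eqref{EEEE4} of Lemma~\ref{Lemma6}, the whole sum factors through $\sum_{i+j+s\le m'-1}z_\zeta(i)z_\zeta(j)z_\zeta(s)=0$; this argument survives the specialization $a=1$ and supplies the proper-divisor cyclotomic factors. Second, it keeps a second parameter $b$ (Lemmas~\ref{Lem5} and \ref{Lem6}) and applies the Chinese remainder theorem to the moduli $[n](1-aq^n)(a-q^n)$ and $(b-q^n)$, so that setting $b=1$ and then $a=1$ legitimately produces $\Phi_n(q)^4$, after which $\operatorname{lcm}\bigl([n],\Phi_n(q)^4\bigr)=[n]\Phi_n(q)^3$ gives the stated modulus. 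To repair your proof you would need to add an argument of the first kind (divisibility of the triple sum and of the right-hand side by $[n]$); the cube structure alone, with a single parameter $a$, cannot deliver it.
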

In order to complete the proof of  Theorem \ref{THM1},
we first need to prove  following two lemmas basing on the results provided in \cite{Liu_Wang} and \cite{Wang_Xu1}.
\begin{lemma}\label{Lem5}
Let $n>1$, $d\geq3$ be integers with $n\equiv1\pmod d$. Let
\begin{equation*}
z_q(k)=[2dk+1]\frac{(q,aq,q/a,q/b;q^d)_k}{(q^d,aq^d,q^d/a,bq^d;q^d)_k}b^kq^{(d-2)k}.
\end{equation*}
Then, for nonnegative integers  $i$, $j$, $s$, there holds,
\begin{flalign*}
\sum_{\substack{i+j+s\leq n-1}}z_q(i)z_q(j)z_q(s)
\equiv
[n]^3\left(\frac{b}{q}\right)^{\frac{3(n-1)}{d}}\frac{(q^2/b;q^d)^3_{\frac{n-1}{d}}}{(bq^d;q^d)^3_{\frac{n-1}{d}}}
\pmod{[n](1-aq^n)(a-q^n)}
.
\end{flalign*}
\end{lemma}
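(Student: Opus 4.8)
The plan is to mimic the structure of the proofs of Theorems \ref{Thm1}, \ref{Thm2} and \ref{Thm5}: reduce the triple-sum identity to a single-sum evaluation via Lemma \ref{Lemma6}, then use the `creative microscoping' idea of specializing $a$ to $q^{\pm n}$. First I would invoke the single-sum result on which this lemma ultimately rests — namely the evaluation of $\sum_{k=0}^{(n-1)/d} z_q(k)$ coming from \cite{Liu_Wang} and \cite{Wang_Xu1}; this is a $q$-congruence (indeed, for the relevant parameter ranges, an \emph{identity} after the specialization $a=q^{\pm n}$) of the form
\begin{equation*}
\sum_{k=0}^{\frac{n-1}{d}} z_q(k)\bigg|_{a=q^{\pm n}} = [n]\left(\frac{b}{q}\right)^{\frac{n-1}{d}}\frac{(q^2/b;q^d)_{\frac{n-1}{d}}}{(bq^d;q^d)_{\frac{n-1}{d}}},
\end{equation*}
or the corresponding congruence modulo $[n]$ in general. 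The key observation making Lemma \ref{Lemma6} applicable is that $z_q(k)$ contains the factor $(q;q^d)_k$, and since $n\equiv 1\pmod d$ we have $(q;q^d)_k\equiv 0 \pmod{\Phi_n(q)}$ — hence $z_q(k)\equiv 0$ — for $(n-1)/d < k \le n-1$. More precisely, after substituting $a=q^n$ or $a=q^{-n}$, the factor $(1-aq^n)(a-q^n)$ vanishes and the remaining expression genuinely has $z_q(k)=0$ for $(n-1)/d<k<n$, so the hypothesis ``$c(k)=0$ for $(n-r)/d<k<n$'' of Lemma \ref{Lemma6} is met with $r=1$; note one should check $n-\lfloor d(n-1)/3\rfloor \le 1$, which holds since $d\ge 3$.

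The central computation then proceeds as follows. Apply \eqref{EEEE3} of Lemma \ref{Lemma6} with $c(k)=z_q(k)\big|_{a=q^{\pm n}}$ to get $\sum_{i+j+s\le n-1} z_q(i)z_q(j)z_q(s) = \big(\sum_{i=0}^{(n-1)/d} z_q(i)\big)^3$, and substitute the single-sum evaluation above; the right-hand side becomes exactly $[n]^3(b/q)^{3(n-1)/d}(q^2/b;q^d)_{(n-1)/d}^3/(bq^d;q^d)_{(n-1)/d}^3$. Since this holds for both specializations $a=q^n$ and $a=q^{-n}$, and the right-hand side does not depend on $a$, the polynomial identity in $a$
\begin{equation*}
\sum_{\substack{i+j+s\le n-1}} z_q(i)z_q(j)z_q(s) - [n]^3\left(\frac{b}{q}\right)^{\frac{3(n-1)}{d}}\frac{(q^2/b;q^d)^3_{\frac{n-1}{d}}}{(bq^d;q^d)^3_{\frac{n-1}{d}}}
\end{equation*}
is divisible by both $1-aq^n$ and $a-q^n$; as these are coprime, it is divisible by $(1-aq^n)(a-q^n)$, giving the asserted congruence modulo $(1-aq^n)(a-q^n)$. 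To upgrade the modulus to $[n](1-aq^n)(a-q^n)$, I would observe that the left-hand side is itself $\equiv 0\pmod{[n]}$: by the same $(q;q^d)_k\equiv 0\pmod{\Phi_n(q)}$ argument applied with a generic $a$, together with Lemma \ref{Lemma6}, one gets $\sum_{i+j+s\le n-1} z_q(i)z_q(j)z_q(s)\equiv \big(\sum_{i=0}^{(n-1)/d} z_q(i)\big)^3\equiv 0\pmod{\Phi_n(q)}$, and the cube forces enough vanishing; meanwhile the right-hand side is a multiple of $[n]^3$, hence of $[n]$. Since $[n]$, $1-aq^n$, $a-q^n$ are pairwise coprime as polynomials in $a$, the Chinese remainder theorem combines the two congruences into one modulo $[n](1-aq^n)(a-q^n)$.

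The main obstacle I anticipate is \emph{not} the combinatorial reduction — Lemma \ref{Lemma6} handles that cleanly — but rather pinning down the exact single-sum evaluation of $\sum_{k=0}^{(n-1)/d} z_q(k)$ with the extra parameter $b$ and confirming it holds as a genuine identity (not merely a congruence) after $a\mapsto q^{\pm n}$; this requires careful bookkeeping of the powers of $b$, the shift $q^{(d-2)k}$, and the $(q/b;q^d)_k / (bq^d;q^d)_k$ ratio, and is where one must lean precisely on the formulas of \cite{Liu_Wang} and \cite{Wang_Xu1}. A secondary subtlety is verifying that the modulus-$[n]$ vanishing of the left side is robust for generic $a$ (so that CRT applies in the polynomial ring in $a$ over $\mathbb{Z}[q,q^{-1},b,b^{-1}]$), which needs the factor $(q;q^d)_k$ to survive the reduction modulo each $\Phi_n(q)$ rather than only modulo $[n]$ as a whole; this follows because $n\equiv 1\pmod d$ guarantees $q^n$ is a root of $(q;q^d)_k$-type factors for the relevant $k$, but it deserves an explicit line.
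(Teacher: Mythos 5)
Your treatment of the modulus $(1-aq^n)(a-q^n)$ is essentially the paper's: specialize $a=q^{\pm n}$ in the single-sum congruence \eqref{Equation3} to get an exact evaluation, note that the specialized terms vanish identically for $(n-1)/d<k<n$, apply the first part \eqref{EEEE3} of Lemma \ref{Lemma6}, and observe that the resulting right-hand side is independent of $a$, so the difference is divisible by $(1-aq^n)(a-q^n)$. That part is fine.

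The genuine gap is the factor $[n]$ in the modulus. You justify it by ``$\sum_{i+j+s\le n-1}z_q(i)z_q(j)z_q(s)\equiv\big(\sum_{i\le (n-1)/d}z_q(i)\big)^3\equiv 0\pmod{\Phi_n(q)}$, and the cube forces enough vanishing.'' This argument can only ever produce divisibility by a power of $\Phi_n(q)$, whereas $[n]=\prod_{m\mid n,\,m>1}\Phi_m(q)$ requires divisibility by $\Phi_m(q)$ for \emph{every} divisor $m>1$ of $n$. For a proper divisor $m<n$ the factor $(q;q^d)_k$ does not vanish modulo $\Phi_m(q)$ in the range $(n-1)/d<k<n$ in any way aligned with the truncation at $n-1$, so your vanishing argument simply does not apply there. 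The paper handles exactly this point with the second half \eqref{EEEE4} of Lemma \ref{Lemma6}: it sets $q=\zeta$, an arbitrary $n$-th root of unity $\neq 1$ (primitive of order $m\mid n$, $m>1$), shows the block sum $\sum_{i+j+s\le m-1}z_\zeta(i)z_\zeta(j)z_\zeta(s)=0$, checks the periodicity $z_\zeta(lm+k)/z_\zeta(lm)=z_\zeta(k)$, and then uses \eqref{EEEE4} to decompose $\sum_{i+j+s\le n-1}z_\zeta(i)z_\zeta(j)z_\zeta(s)$ into blocks each containing that vanishing factor, concluding the whole sum vanishes at every such $\zeta$ and hence is divisible by $[n]$ (this is \eqref{EEEE6}). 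You never invoke \eqref{EEEE4} nor evaluate at roots of unity, so as written your proof only establishes the congruence modulo $\Phi_n(q)(1-aq^n)(a-q^n)$, not modulo $[n](1-aq^n)(a-q^n)$; relatedly, your closing remark about $(q;q^d)_k$ surviving reduction ``modulo each $\Phi_n(q)$'' addresses the wrong cyclotomic factors --- the delicate ones are $\Phi_m(q)$ with $m$ a proper divisor of $n$, which is precisely why the periodicity part of Lemma \ref{Lemma6} (and not just its first part) is needed.
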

\begin{proof}
Substituting $r=1$ and $ce=q^{d-1}$ into \cite[Lemma 2]{Liu_Wang} gives,
\begin{flalign}
\sum_{k=0}^{M}z_q(k)
\equiv[n]\left(\frac{b}{q}\right)^{\frac{n-1}{d}}
\frac{(q^{2}/b;q^d)_{\frac{n-1}{d}}}{(bq^d;q^d)_{\frac{n-1}{d}}}\pmod{\Phi_n(q)(1-aq^n)(a-q^n)}
,\label{Equation3}
\end{flalign}
where $M=(n-1)/d$ or $n-1$.
It is easy to see that $(q;q^d)_k\equiv0\pmod{\Phi_n(q)}$ for $(n-1)/d<k<n$. Then, there holds
$z_q(k)\equiv0 \pmod{\Phi_n(q)}$ for $(n-1)/d<k<n$. Thus, employing  Lemma \ref{Lemma6} produces
\begin{equation}\label{Equation4}
\sum_{\substack{i+j+s\leq n-1}}z_q(i)z_q(j)z_q(s)
\equiv
[n]^3\left(\frac{b}{q}\right)^{\frac{3(n-1)}{d}}\frac{(q^2/b;q^d)^3_{\frac{n-1}{d}}}{(bq^d;q^d)^3_{\frac{n-1}{d}}} \pmod{\Phi_n(q)}.
\end{equation}
Let $\zeta\neq1$ be an $n$-th root of unity, not necessarily primitive.
Namely, $\zeta$ is a primitive $m$-th root of unity with $m\mid n$ and $m>1$.
From \eqref{Equation4}, we get
\begin{equation}\label{EEEE5}
\sum_{\substack{i+j+s\leq m-1}}z_{\zeta}(i)z_{\zeta}(j)z_{\zeta}(s)=0.
\end{equation}
It is not difficult to check $z_{\zeta}(lm+k)/z_{\zeta}(lm)=z_{\zeta}(k)$ for nonnegative integers $l$ and $k$ with $0\leq k\leq m-1$.
By Lemma \ref{Lemma6} again, we have
\begin{flalign}
&\sum_{\substack{i+j+s\leq n-1}}z_{\zeta}(i)z_{\zeta}(j)z_{\zeta}(s)\nonumber\\
&=\sum_{r=0}^{n-1}\sum_{i=0}^{r}z_{\zeta}(i)\sum_{j=0}^{r-i}z_{\zeta}(j)z_{\zeta}(r-i-j)\nonumber\\
&=\sum_{l=0}^{\frac{n}{m}-1}\sum_{k=0}^{m-1}\sum_{i=0}^{lm+k}z_{\zeta}(i)\sum_{j=0}^{lm+k-i}z_{\zeta}(j)z_{\zeta}(lm+k-i-j)\nonumber\\
&=\sum_{l=0}^{\frac{n}{m}-1}\sum_{k=0}^{m-1}
\sum_{a=0}^{l}z_{\zeta}(am)\sum_{t=0}^{l-a}z_{\zeta}(tm)z_{\zeta}((l-a-t)m)
\sum_{i=0}^{k}z_{\zeta}(i)\sum_{j=0}^{k-i}z_{\zeta}(j)z_{\zeta}(k-i-j)
\nonumber\\
&=\sum_{l=0}^{\frac{n}{m}-1}\sum_{a=0}^{l}z_{\zeta}(am)\sum_{t=0}^{l-a}z_{\zeta}(tm)z_{\zeta}((l-a-t)m)
\sum_{k=0}^{m-1}\sum_{i=0}^{k}z_{\zeta}(i)
\sum_{j=0}^{k-i}z_{\zeta}(j)z_{\zeta}(k-i-j)\nonumber\\
&=0.\nonumber
\end{flalign}
Since the above equality is true for any $n$-th root of unity $\zeta\neq1$, we conclude that
\begin{equation}\label{EEEE6}
\sum_{\substack{i+j+s\leq n-1}}z_{q}(i)z_{q}(j)z_{q}(s)\equiv0\pmod{[n]}.
\end{equation}
On the other hand, letting $a=q^n$ or $q^{-n}$ in \eqref{Equation3},  we have
\begin{equation*}
\sum_{k=0}^{M}\tilde{z}_q(k)
=[n]\left(\frac{b}{q}\right)^{\frac{n-1}{d}}
\frac{(q^{2}/b;q^d)_{\frac{n-1}{d}}}{(bq^d;q^d)_{\frac{n-1}{d}}},
\end{equation*}
where $\tilde{z}_q(k)$ is defined by
\begin{equation*}
\tilde{z}_q(k)=[2dk+1]\frac{(q,q^{1+n},q^{1-n},q/b;q^d)_k}{(q^d,q^{d+n},q^{d-n},bq^d;q^d)_k}b^kq^{(d-2)k}.
\end{equation*}
Note the fact that $\tilde{z}_q(k)=0$ for $(n-1)/d<k<n$. Using Lemma \ref{Lemma6} again, we get
\begin{equation*}
\sum_{\substack{i+j+s\leq n-1}}\tilde{z}_q(i)\tilde{z}_q(j)\tilde{z}_q(s)
=
[n]^3\left(\frac{b}{q}\right)^{\frac{3(n-1)}{d}}\frac{(q^2/b;q^d)^3_{\frac{n-1}{d}}}{(bq^d;q^d)^3_{\frac{n-1}{d}}}.
\end{equation*}
Namely, we have the following $q$-congruence for the variable $a$: modulo $(1-aq^n)(a-q^n)$,
\begin{equation}\label{Equation6}
\sum_{\substack{i+j+s\leq n-1}}z_q(i)z_q(j)z_q(s)
\equiv
[n]^3\left(\frac{b}{q}\right)^{\frac{3(n-1)}{d}}\frac{(q^2/b;q^d)^3_{\frac{n-1}{d}}}{(bq^d;q^d)^3_{\frac{n-1}{d}}}.
\end{equation}
Then, by \eqref{EEEE6},
we conclude \eqref{Equation6} holds modulo $[n](1-aq^n)(a-q^n)$
 as desired.
\end{proof}

\begin{lemma}\label{Lem6}
Let $n>1$, $d\geq3$ be integers with $n\equiv1\pmod d$. Let
\begin{equation*}
z_q(k)=[2dk+1]\frac{(q,aq,q/a,q/b;q^d)_k}{(q^d,aq^d,q^d/a,bq^d;q^d)_k}b^kq^{(d-2)k}.
\end{equation*}
For nonnegative integers  $i$, $j$, $s$, there holds,
\begin{flalign*}
\sum_{\substack{i+j+s\leq n-1}}z_q(i)z_q(j)z_q(s)
\equiv
[n]^3\frac{(q,q^{d-1};q^d)^3_{\frac{n-1}{d}}}{(aq^d,q^d/a;q^d)^3_{\frac{n-1}{d}}} \pmod{b-q^n}.
\end{flalign*}
\end{lemma}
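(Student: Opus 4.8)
\textbf{Proof proposal for Lemma \ref{Lem6}.}
The plan is to mimic the structure of the proof of Lemma \ref{Lem5}, but this time reducing modulo the single linear factor $b-q^n$ instead of $(1-aq^n)(a-q^n)$. First I would specialize an appropriate single-variable summation formula at $b=q^n$. Starting again from \cite[Lemma 2]{Liu_Wang} with $r=1$ and $ce=q^{d-1}$ (the same specialization used in Lemma \ref{Lem5}), one obtains a closed form for $\sum_{k=0}^{M}z_q(k)$ modulo $\Phi_n(q)(1-aq^n)(a-q^n)$; but there is in fact a companion evaluation of the same very-well-poised series in which the closed form is written in the ``other'' symmetric shape, namely with $(q;q^d)_{(n-1)/d}(q^{d-1};q^d)_{(n-1)/d}$ in the numerator and $(aq^d;q^d)_{(n-1)/d}(q^d/a;q^d)_{(n-1)/d}$ in the denominator. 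Setting $b=q^n$ there, the parameter $q/b=q^{1-n}$ makes $(q/b;q^d)_k$ terminate the sum at $k=(n-1)/d$, and the resulting identity is an \emph{exact} equality (no modulus), valid for $M=(n-1)/d$ and, since the extra terms vanish, also for $M=n-1$.

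Next I would feed this single-sum evaluation into the triple-sum machinery of Lemma \ref{Lemma6}. The key vanishing input is that $(q;q^d)_k\equiv 0\pmod{\Phi_n(q)}$ for $(n-1)/d<k<n$, hence $z_q(k)\big|_{b=q^n}=0$ for those $k$ (here one uses $n\equiv 1\pmod d$ so that $(n-1)/d$ is an integer and $n-\lfloor d(n-1)/3\rfloor\le 1$ holds for $d\ge 3$, so the hypotheses of Lemma \ref{Lemma6} are met with $r=1$). Applying \eqref{EEEE3} then cubes the single-sum evaluation, giving
\begin{equation*}
\sum_{\substack{i+j+s\leq n-1}}z_q(i)z_q(j)z_q(s)\Big|_{b=q^n}
=[n]^3\frac{(q,q^{d-1};q^d)^3_{\frac{n-1}{d}}}{(aq^d,q^d/a;q^d)^3_{\frac{n-1}{d}}},
\end{equation*}
where in fact $[n]=[n]_q$ arises because the single-sum closed form carries a factor $[n]$. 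Since both sides are polynomials (Laurent polynomials in $a$) in the variable $b$ and they agree at $b=q^n$, the congruence holds modulo $b-q^n$, which is the claim.

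The main obstacle I anticipate is locating and justifying the correct ``alternative'' closed form of the terminating very-well-poised series when $b=q^n$ — that is, checking that the evaluation in \cite{Liu_Wang} (or a Watson/Whipple-type transform applied to it) genuinely produces the symmetric factorization $(q;q^d)_{(n-1)/d}(q^{d-1};q^d)_{(n-1)/d}/[(aq^d;q^d)_{(n-1)/d}(q^d/a;q^d)_{(n-1)/d}]$ rather than the $b$-dependent shape used in Lemma \ref{Lem5}. Once that single-variable identity is in hand, the passage to the triple sum is purely formal via Lemma \ref{Lemma6}, exactly as in Lemma \ref{Lem5}; no Chinese-remainder argument is needed here because we are only working modulo the prime-to-$\Phi_n(q)$ factor $b-q^n$. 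Finally, combining this lemma with Lemma \ref{Lem5} (specialized at $b=1$) and the pairwise coprimality of $[n]$, $(1-aq^n)(a-q^n)$, and $b-q^n$ will, via the Chinese remainder theorem and the limit $a\to 1$, yield Theorem \ref{THM1}.
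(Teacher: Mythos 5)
Your overall strategy is the same as the paper's: specialize a single-sum $q$-congruence at $b=q^n$, where $q/b=q^{1-n}$ makes the series terminate at $k=(n-1)/d$ and turns the congruence into an exact identity; observe that $z_q(k)|_{b=q^n}=0$ for $(n-1)/d<k<n$; cube via \eqref{EEEE3} of Lemma \ref{Lemma6}; and conclude modulo the linear factor $b-q^n$. The problem is that the step you yourself flag as the ``main obstacle'' is exactly the nontrivial content of the lemma, and your proposal does not supply it. The paper does not obtain the symmetric closed form as a ``companion shape'' of the Liu--Wang evaluation \cite[Lemma 2]{Liu_Wang} used in Lemma \ref{Lem5}; it invokes a different input, namely \cite[Lemma 2.4]{Wang_Xu1} with $r=1$ and $ce=q^{d-1}$, which states directly that $\sum_{k=0}^{M}z_q(k)\equiv[n]\,(q,q^{d-1};q^d)_{(n-1)/d}/(aq^d,q^d/a;q^d)_{(n-1)/d}\pmod{b-q^n}$ for $M=(n-1)/d$ or $n-1$ (this is \eqref{Equation7}). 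Setting $b=q^n$ there gives the exact single-sum evaluation, and only then is the passage to the triple sum purely formal. Unless you either cite such a result or actually derive your conjectured ``alternative evaluation'' (e.g.\ by a Watson/Whipple-type transformation of the terminating very-well-poised series at $b=q^n$), the argument does not close, since everything else in your write-up is bookkeeping around this identity.

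A smaller inaccuracy: in the triple-sum step you justify the vanishing $z_q(k)|_{b=q^n}=0$ for $(n-1)/d<k<n$ by $(q;q^d)_k\equiv0\pmod{\Phi_n(q)}$. That only yields a congruence, whereas the exact-equality statement \eqref{EEEE3} of Lemma \ref{Lemma6} requires genuine vanishing; the correct reason is the one you give earlier for the termination of the single sum, namely that $(q/b;q^d)_k=(q^{1-n};q^d)_k$ contains the factor $1-q^{1-n+d(n-1)/d}=0$ as soon as $k>(n-1)/d$. Likewise, the final deduction is simply that a congruence of rational functions of $b$ (with denominators $(bq^d;q^d)_k$ not vanishing at $b=q^n$) modulo the linear polynomial $b-q^n$ is equivalent to equality at $b=q^n$; no appeal to polynomiality in $a$ is needed. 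These are minor repairs; the substantive gap is the missing single-sum evaluation modulo $b-q^n$.
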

\begin{proof}
Setting $r=1$ and $ce=q^{d-1}$  in \cite[Lemma 2.4]{Wang_Xu1} gives
\begin{equation}\label{Equation7}
\sum_{k=0}^{M}z_q(k)
\equiv
[n]
\frac{(q,q^{d-1};q^d)_{\frac{n-1}{d}}}{(aq^d,q^d/a;q^d)_{\frac{n-1}{d}}} \pmod{(b-q^n)},
\end{equation}
where $M=(n-1)/d$ or $n-1$.
Letting $b=q^n$ in \eqref{Equation7}, we get
\begin{equation*}
\sum_{k=0}^{M}\hat{z}_q(k)
=
[n]
\frac{(q,q^{d-1};q^d)_{\frac{n-1}{d}}}{(aq^d,q^d/a;q^d)_{\frac{n-1}{d}}},
\end{equation*}
where $\hat{z}_q(k)$ denotes
\begin{equation*}
\hat{z}_q(k)=[2dk+1]\frac{(q,aq,q/a,q^{1-n};q^d)_k}{(q^d,aq^d,q^d/a,q^{d+n};q^d)_k}q^{(n+d-2)k}.
\end{equation*}
Note that $\hat{z}_q(k)=0$ for $(n-1)/d<k<n$. From Lemma \ref{Lemma6}, we obtain
\begin{equation*}
\sum_{\substack{i+j+s\leq n-1}}\hat{z}_q(i)\hat{z}_q(j)\hat{z}_q(s)
=
[n]^3
\frac{(q,q^{d-1};q^d)^3_{\frac{n-1}{d}}}{(aq^d,q^d/a;q^d)^3_{\frac{n-1}{d}}},
\end{equation*}
which means
\begin{equation*}
\sum_{\substack{i+j+s\leq n-1}}z_q(i)z_q(j)z_q(s)
\equiv
[n]^3
\frac{(q,q^{d-1};q^d)^3_{\frac{n-1}{d}}}{(aq^d,q^d/a;q^d)^3_{\frac{n-1}{d}}}\pmod{(b-q^n)}
\end{equation*}
holds.
\end{proof}

\begin{proof}[Proof of Theorem \ref{THM1}]
It is easy to see that $[n](1-aq^n)(a-q^n)$ and $b-q^n$ are relatively prime polynomials. Noting the relations
\begin{align*}
\frac{(b-q^n)(ab-1-a^2+aq^n)}{(a-b)(1-ab)}&\equiv 1 \pmod{(1-aq^n)(a-q^n)},\\
\frac{(1-aq^n)(a-q^n)}{(a-b)(1-ab)}&\equiv 1 \pmod{b-q^n},
\end{align*}
and employing the Chinese remainder theorem for coprime polynomials, we derive the following result
from Lemma \ref{Lem5} and Lemma \ref{Lem6}: modulo $[n](1-aq^n)(a-q^n)(b-q^n)$,
\begin{flalign}
&\sum_{\substack{i+j+s\leq n-1}}z_q(i)z_q(j)z_q(s)\nonumber\\
&\equiv
[n]^3\bigg\{
\frac{(b-q^n)(ab-1-a^2+aq^n)}{(a-b)(1-ab)}\bigg(\frac{b}{q}\bigg)^{\frac{3(n-1)}{d}}
\frac{(q^2/b;q^d)^3_{\frac{n-1}{d}}}{(bq^d;q^d)^3_{\frac{n-1}{d}}}\nonumber\\
&+\frac{(1-aq^n)(a-q^n)}{(a-b)(1-ab)}\frac{(q,q^{d-1};q^d)^3_{\frac{n-1}{d}}}{(aq^d,q^d/a;q^d)^3_{\frac{n-1}{d}}}\bigg\}
.\label{Eequation6}
\end{flalign}
Taking $b=1$ in \eqref{Eequation6}, we deduce that, modulo $[n]\Phi_n(q)(1-aq^n)(a-q^n)$,
\begin{flalign}
\sum_{\substack{i+j+s\leq n-1}}z_q(i)z_q(j)z_q(s)
\equiv
[n]^3
q^{\frac{3(1-n)}{d}}
\frac{(q^2;q^d)^3_{\frac{n-1}{d}}}{(q^d;q^d)^3_{\frac{n-1}{d}}}
.\label{Eequation7}
\end{flalign}
Here we have employed the relation:
\begin{equation*}
(1-q^n)(1+a^2-a-aq^n)=(1-a)^2+(1-aq^n)(a-q^n).
\end{equation*}
Finally, letting $a=1$ in \eqref{Eequation7} and noting $1-q^n$ has the factor $\Phi_n(q)$,
we can see that the  congruence \eqref{AE1} is true modulo $\Phi_n(q)^4$. On the other hand, our proof of
\eqref{EEEE6} is still valid for $a=1$, which means \eqref{AE1} is also correct modulo $[n]$.
Since the least common multiple  of $\Phi_n(q)^4$ and $[n]$ is $[n]\Phi_n(q)^3$, we finish the proof of Theorem \ref{THM1}.
\end{proof}

In \cite{He}, He  obtained the following Ramanujan-type supercongruence: for $p\geq5$,
\begin{equation*}
\sum_{k=0}^{(p-1)/3}(6k+1)\frac{(1/3)^4_k}{k!^4}\equiv -p\Gamma_p(1/3)^3\pmod {p^4}, \quad\mathrm{if}\ p\equiv1\pmod3.
\end{equation*}
We here give the corresponding $q$-supercongruence on triple sums by making the substitution $d=3$ in Theorem \ref{THM1}.
 \begin{corollary}
Let $n>1$ be an integer with $n\equiv1\pmod3$ and $c_q(k)$ be the $k$-th term of
\begin{equation*}
\sum_{k=0}^{\infty}[6k+1]\frac{(q;q^3)^4_k}{(q^3;q^3)^4_k}q^{k}.
\end{equation*}
For nonnegative integers  $i$, $j$, $s$, there holds, modulo $[n]\Phi_n(q)^3$,
\begin{flalign*}
\sum_{\substack{i+j+s\leq n-1}}c_q(i)c_q(j)c_q(s)
\equiv
[n]^3q^{1-n}\frac{(q^2;q^3)^3_{\frac{n-1}{3}}}{(q^3;q^3)^3_{\frac{n-1}{3}}}
.
\end{flalign*}
\end{corollary}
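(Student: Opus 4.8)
The plan is simply to specialize Theorem~\ref{THM1} to the case $d=3$. First I would check that the hypotheses of Theorem~\ref{THM1} are met: we need integers $n>1$ and $d\geq 3$ with $n\equiv 1\pmod d$. Taking $d=3$, the divisibility condition $n\equiv 1\pmod d$ becomes exactly the assumption $n\equiv 1\pmod 3$ of the corollary, and $d=3\geq 3$, so Theorem~\ref{THM1} applies verbatim (note that, just as in Theorem~\ref{THM1}, no parity hypothesis on $n$ is needed).

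Next I would substitute $d=3$ into the statement of Theorem~\ref{THM1}. The sequence in that theorem, $c_q(k)=[2dk+1](q;q^d)^4_k/(q^d;q^d)^4_k\,q^{(d-2)k}$, becomes $c_q(k)=[6k+1](q;q^3)^4_k/(q^3;q^3)^4_k\,q^{k}$, which is precisely the $k$-th term of $\sum_{k\geq 0}[6k+1](q;q^3)^4_k/(q^3;q^3)^4_k\,q^{k}$ appearing in the corollary. On the right-hand side of \eqref{AE1}, the prefactor $q^{3(1-n)/d}$ becomes $q^{3(1-n)/3}=q^{1-n}$, the quotient of $q$-shifted factorials becomes $(q^2;q^3)^3_{(n-1)/3}/(q^3;q^3)^3_{(n-1)/3}$, and the modulus $[n]\Phi_n(q)^3$ is unchanged. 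Assembling these substitutions yields exactly the asserted $q$-supercongruence on triple sums.

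There is no genuine obstacle here: the corollary is an immediate specialization of Theorem~\ref{THM1}, and the only care needed is the routine bookkeeping of checking that the exponents of $q$ and the indices of the $q$-shifted factorials specialize consistently, which they do since $3(1-n)/d$, $(n-1)/d$ and the modulus are all well defined at $d=3$ under the hypothesis $n\equiv 1\pmod 3$. For context, letting $q\to 1$ and $n=p$ a prime with $p\equiv 1\pmod 3$ recovers a triple-sum analogue of He's supercongruence quoted above, which is the motivation for recording this case explicitly.
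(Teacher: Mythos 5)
Your proposal is correct and coincides with the paper's own treatment: the corollary is obtained precisely by setting $d=3$ in Theorem~\ref{THM1}, with the routine checks that $q^{3(1-n)/d}$ becomes $q^{1-n}$ and the factorial quotient and modulus specialize as stated.
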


Similarly, setting $d=4$ in Theorem \ref{THM1} produces a new $q$-supercongruence related to van Hamme's (G.2).
\begin{corollary}
Let $n>1$ be an integer with $n\equiv1\pmod4$ and $c_q(k)$ be the $k$-th term of
\begin{equation*}
\sum_{k=0}^{\infty}[8k+1]\frac{(q;q^4)^4_k}{(q^4;q^4)^4_k}q^{2k}.
\end{equation*}
For nonnegative integers  $i$, $j$, $s$, there holds, modulo $[n]\Phi_n(q)^3$,
\begin{flalign}
\sum_{\substack{i+j+s\leq n-1}}c_q(i)c_q(j)c_q(s)
\equiv
[n]^3q^{\frac{3(1-n)}{4}}\frac{(q^2;q^4)^3_{\frac{n-1}{4}}}{(q^4;q^4)^3_{\frac{n-1}{4}}}
.\label{Equation1}
\end{flalign}
\end{corollary}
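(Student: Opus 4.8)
The plan is to recognize that this corollary is nothing more than the specialization $d=4$ of Theorem \ref{THM1}, so the proof consists of checking that the hypotheses transfer and that the two sides match after the substitution. First I would verify the hypotheses of Theorem \ref{THM1}: we require $n>1$, $d\geq 3$, and $n\equiv 1\pmod d$; taking $d=4$ we have $d\geq 3$, and the assumption $n\equiv 1\pmod 4$ is exactly $n\equiv 1\pmod d$, so Theorem \ref{THM1} applies with no further conditions. Next I would confirm that the summand matches: the $c_q(k)$ of Theorem \ref{THM1} is $[2dk+1](q;q^d)^4_k/(q^d;q^d)^4_k\,q^{(d-2)k}$, which for $d=4$ becomes $[8k+1](q;q^4)^4_k/(q^4;q^4)^4_k\,q^{2k}$, that is, precisely the $k$-th term of the series $\sum_{k\geq 0}[8k+1](q;q^4)^4_k/(q^4;q^4)^4_k\,q^{2k}$ displayed in the corollary.

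It then remains to substitute $d=4$ into the conclusion \eqref{AE1}. The right-hand side $[n]^3 q^{3(1-n)/d}(q^2;q^d)^3_{(n-1)/d}/(q^d;q^d)^3_{(n-1)/d}$ becomes $[n]^3 q^{3(1-n)/4}(q^2;q^4)^3_{(n-1)/4}/(q^4;q^4)^3_{(n-1)/4}$, and the modulus $[n]\Phi_n(q)^3$ is untouched by the specialization; this is exactly the assertion \eqref{Equation1}. There is no genuine obstacle here, since everything is a direct specialization; if instead one wanted a self-contained argument one would retrace the proof of Theorem \ref{THM1} with $d=4$, namely start from the two-parameter single-sum $q$-congruence of \cite{Liu_Wang,Wang_Xu1} (a $q$-analogue of van Hamme's $\mathrm{(G.2)}$), promote it to the triple sum via Lemma \ref{Lemma6}, then combine the creative-microscoping instances $a=q^{\pm n}$ and $b=q^n$ through the Chinese remainder theorem, and finally let $a,b\rightarrow 1$. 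The point of recording this special case is that, on taking $n=p$ an odd prime with $p\equiv 1\pmod 4$ and letting $q\rightarrow 1$, \eqref{Equation1} collapses to a triple-sum supercongruence naturally attached to van Hamme's $\mathrm{(G.2)}$.
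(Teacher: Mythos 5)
Your proposal is correct and matches the paper exactly: the corollary is obtained there by simply setting $d=4$ in Theorem \ref{THM1}, and your verification of the hypotheses, of the summand, and of the specialized right-hand side is precisely that specialization. Nothing further is needed.
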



\end{document}